

%
\documentclass{amsart}
\usepackage{amsfonts}
\usepackage{amssymb}
\usepackage{graphicx}
\usepackage{enumerate}
\usepackage{amsmath}
\usepackage{relsize} 

\newtheorem{theorem}{Theorem}[section]
\newtheorem{lemma}[theorem]{Lemma}
\newtheorem{corollary}[theorem]{Corollary}

\theoremstyle{definition}

\theoremstyle{remark}

\numberwithin{equation}{section}

\newcommand{\refth}[1]{Theorem~\ref{#1}}
\newcommand{\reflm}[1]{Lemma~\ref{#1}}
\newcommand{\refco}[1]{Corollary~\ref{#1}}

\newcommand{\refeq}[1]{(\ref{#1})}

\newcommand{\beeq}[1]{\begin{equation} \label{#1}}
\newcommand{\eeq}{\end{equation}}
\renewcommand{\(}{\begin{eqnarray*}}
\renewcommand{\)}{\end{eqnarray*}}
\newcommand{\beeqn}{\begin{eqnarray}}
\newcommand{\eeqn}{\end{eqnarray}}
\newcommand{\bq}{\begin{eqnarray}}
\newcommand{\eq}{\end{eqnarray}}
\newcommand{\bs}[1]{{\boldsymbol #1}}
\newcommand{\sothat}{\\ \Rightarrow \hspace*{2mm} &&}
\newcommand{\nexteqline}{\\ &=&}
\newcommand{\eqspace}{\hspace*{5mm}}
\newcommand{\eqand}{\mbox{\hspace*{3mm} and \hspace*{3mm}}}
\newcommand{\eqwhere}{\mbox{\hspace*{3mm} where \hspace*{3mm}}}

\renewcommand{\quad}{\hspace*{5mm}}
\renewcommand{\qquad}{\hspace*{10mm}}

\newcommand{\lp}{\left(  }
\newcommand{\rp}{\right) }
\newcommand{\lb}{\left\{  }
\newcommand{\rb}{\right\} }
\newcommand{\lbr}{\left[  }
\newcommand{\rbr}{\right] }
\newcommand{\lf}{\left\lfloor}
\newcommand{\rf}{\right\rfloor}
\newcommand{\lc}{\left\lceil}
\newcommand{\rc}{\right\rceil}


\newcommand{\Z}{{\mathbb Z}}

\newcommand{\R}{{\mathbb R}}

\newcommand{\ep}{\epsilon}
\newcommand{\FF}{{\mathcal F}}
\newcommand{\GG}{{\mathcal G}}
\newcommand{\DD}{{\mathcal D}}
\newcommand{\YY}{{\mathcal Y}}

\newcommand{\TT}{{\mathcal T}}

\newcommand{\HH}{{\mathcal H}}

\newcommand{\PP}{{\mathcal P}}

\newcommand{\CC}{{\mathcal C}}

\newcommand{\XX}{{\mathcal X}}

\newcommand{\UU}{{\mathcal U}}

\newcommand{\be}[1]{\begin{enumerate} [#1]}
\newcommand{\ee}{\end{enumerate}}

\newcounter{cnt1}
\newcounter{cnt2}
\newcounter{cnt3}
\newcounter{cnt4}

\newcommand{\bnum}
{
\begin{list}{\arabic{cnt1})}
{
\usecounter{cnt1}
\leftmargin 5mm
\setlength{\leftmargin}{\leftmargin}
\topsep 2pt
\parsep 1pt
\itemsep 1pt}
}
\newcommand{\enum}{\end{list}}

\newcommand{\broman}
{
\begin{list}{\roman{cnt2})}
{
\usecounter{cnt2}
\leftmargin 2mm
\setlength{\leftmargin}{\leftmargin}
\topsep 2pt
\parsep 1pt
\itemsep 1pt}
}
\newcommand{\eroman}{\end{list}}

\newcommand{\bRoman}
{
\begin{list}{\Roman{cnt3})}
{
\usecounter{cnt3}
\leftmargin 5mm
\setlength{\leftmargin}{\leftmargin}
\topsep 2pt
\parsep 1pt
\itemsep 1pt}
}
\newcommand{\eRoman}{\end{list}}

\newcommand{\balph}
{
\begin{list}{\alph{cnt4})}
{
\usecounter{cnt4}
\leftmargin 2mm
\setlength{\leftmargin}{\leftmargin}
\topsep 2pt
\parsep 1pt
\itemsep 1pt}
}
\newcommand{\ealph}{\end{list}}

\newcommand{\bAlph}
{
\begin{list}{\Alph{cnt1})}
{
\usecounter{cnt1}
\leftmargin 5mm
\setlength{\leftmargin}{\leftmargin}
\topsep 2pt
\parsep 1pt
\itemsep 1pt}
}
\newcommand{\eAlph}{\end{list}}

\newcommand{\bdot}
{
\begin{list}{$\cdot$}
{
\leftmargin  3mm
\setlength{\leftmargin}{\leftmargin}
\topsep 3pt
\parsep 1pt
\itemsep 2pt}
}
\newcommand{\edot}{\end{list}}

\newcommand{\bdash}
{
\begin{list}{-}
{
\leftmargin 3mm
\setlength{\leftmargin}{\leftmargin}
\topsep 2pt
\parsep 1pt
\itemsep 1pt}
}
\newcommand{\edash}{\end{list}}

\begin{document}

\title{Improved Bound on Sets Including No Sunflower with Three Petals}


\author{Junichiro Fukuyama}
\address{Department of Computer Science and Engineering\\
The Pennsylvania State University\\
PA 16802, USA}
\curraddr{}
\email{jxf140@psu.edu}
\thanks{}


\subjclass[2010]{05D05: Extremal Set Theory (Primary)}

\keywords{sunflower lemma, sunflower conjecture, $\Delta$-system}

\date{}

\dedicatory{}

\begin{abstract}
A {\em sunflower with $k$ petals}, or {\em $k$-sunflower}, is a family of $k$ sets every two of which have a common intersection. Known since 1960, the {\em sunflower conjecture} states that a family $\FF$ of sets each of cardinality $m$ includes a $k$-sunflower if $|\FF| \ge c_k^m$ for some $c_k \in \R_{>0}$ depending only on $k$. The case $k=3$ of the conjecture was especially emphasized by Erd\"os, for which Kostochka's bound $c m! \lp \frac{\log \log \log m}{\log \log m} \rp^m$ on $|\FF|$ without a 3-sunflower had been the best-known since 1997 until the recent development to update it to $c \log m$. This paper proves with an entirely different combinatorial approach  that 
$\FF$ includes three mutually disjoint sets if it satisfies the $\Gamma \lp c m^{\frac{1}{2}+ \delta} \rp$-condition for any given $\delta \in (0, 1/2)$. 
Here $c$ is a constant depending only on $\delta$, and the $\Gamma$-condition refers to 
\[
| \lb U~:~ U \in \FF \textrm{~and~} S \subset U \rb|
< \lp c m^{\frac{1}{2}+ \delta} \rp^{-|S|} |\FF|, 
\]
for every nonempty set $S$. 
This poses an alternative proof of the 3-sunflower bound $\lp c m^{\frac{1}{2}+ \delta} \rp^m$. 
\end{abstract}

\maketitle

\section{Motivation and Approach}
In this paper we verify the following statement.

\begin{theorem} \label{3SF} 
For each $\delta \in (0, 1/2)$, there exists $c \in \R_{>0}$ such that a family $\FF$ of sets each of cardinality $m \in \Z_{>0}$ includes three mutually disjoint sets if it satisfies the 
$
\Gamma \lp c m^{\frac{1}{2}+ \delta} \rp
$-condition. \qed
\end{theorem}

This means that $\FF$ includes a 3-sunflower if $|\FF|>\lp c m^{\frac{1}{2}+ \delta} \rp^m$, since for such an $\FF$, there exists a set $S$ with $|S|<m$ such that the family $\lb U - S~:~ U \in \FF \textrm{~and~} S \subset U \rb$ satisfies the $
\Gamma \lp c m^{\frac{1}{2}+ \delta} \rp
$-condition in the universal set minus $S$. The claim asymptotically updates Kostochka's bound \cite{Ko97, ASU13} that had been the best-known related to the three-petal sunflower problem noted in \cite{E81} since 1997, until the recent development \cite{ALWZ20} to reduce the upper bound to $c \log m$. 

We prove the statement with a new combinatorial theory describing its approach in the rest of the section.

\subsection{$l$-Extension of a Family of $m$-Sets} 
Let the universal set $X$ have cardinality $n$. Denote a subset of $X$ by a capital alphabetical letter. It is an {\em $m$-set} if its cardinality is $m$. Use the standard notation $[p] := \lbr 1, p \rbr \cap \Z$ for $p \in \Z$, and ${X'\choose m} := \lb U ~:~ U \subset X',~|U| =m  \rb$ for $X' \subset X$. For $\FF \subset {X \choose m}$, we denote 
\(
\FF[S]  := \lb U ~:~ U \in \FF,~ S \subset U  \rb.
\)
The family $\FF$ satisfies {\em the $\Gamma (b)$-condition} $\lp b \in \R_{>0}  \rp$ if 
$
|\FF[S]| < b^{-|S|} |\FF| 
$ for every nonempty set $S$.

The {\em $l$-extension of $\FF$} for $l \in [n] - [m]$ is defined as
\[
Ext\lp \FF, l \rp:= \lb T ~:~ T \in {X \choose l}, \textrm{~and~}
\exists U \in \FF, U \subset T
\rb.
\]
It is shown in \cite{my6} that
\beeq{eqMy6}
\left| Ext \lp \FF, l \rp \right|  \ge {n \choose l} \lb 1 - m
\exp \lbr \frac{-(l-m+1)|\FF|}{8 m! {n \choose m}} \rbr \rb,
\eeq
for any $\FF \subset {X \choose m}$ and $l \in [n] - [m]$. The result means that an $n$-vertex graph $G$ with ${n \choose 2} - k$ edges contains at most $\lf 2 {n \choose l} \exp \lbr - \frac{(l-1)k}{8n(n-1)}   \rbr \rf$ cliques of size $l$:
let the vertex set of $G$ be $X$, and $\FF$ be the set of non-edges in $G$ regarded as a family of 2-sets. Then $Ext \lp \FF, l \rp$ equals the family of $l$-sets each not a clique of size $l$ in $G$, which means the claim.  Similar facts can be seen for $m$-uniform hypergraphs for small $m$ such as 3.

\subsection{Existence of a Bounded Set $T$ with Dense $Ext\lp  \FF[T] , l  \rp$}

An {\em $(l, \lambda)$-extension generator of $\FF$} is a set $T \subset X$ such that
\[
\left| Ext\lp  \FF[T] , l  \rp \right| \ge {n-|T| \choose l-|T|} \lp 1 - e^{-\lambda} \rp,
\]
where $\lambda \in \R_{>0}$, and $e=2.71...$ is the natural logarithm base.  If $\lambda$ is much larger than a constant, the $l$-sets in $Ext \lp \FF[T], l \rp$ form a vast majority of ${X \choose l} [T]$, the family of $l$-sets each containing $T$.

We have a fact shown in \cite{monotone}.

\begin{theorem}
\label{EGT}
(Extension Generator Theorem)
There exists $\ep \in (0, 1)$ satisfying the following statement:
let $X$ be the universal set of cardinality $n$, $m \in [n-1]$, $l \in [n]-[m]$,  and
$
\lambda \in \lp 1, \frac{\ep l}{m^2} \rp.
$
For every nonempty family $\FF \subset {X \choose m}$, there exists an $\lp l, \lambda \rp$-extension generator $T$ of $\FF$ with  
$
|T| \le  \lbr \ln {n \choose m} - \ln |\FF| \rbr \Bigr/ \ln \frac{\ep l}{m^2 \lambda}.
$  \qed
\end{theorem}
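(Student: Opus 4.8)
The plan is to build the generator $T$ greedily, tracking the potential
\[
\mu(S)\;:=\;\frac{|\FF[S]|}{\binom{n-|S|}{m-|S|}}\;\in\;[0,1]
\qquad(S\subset X,\ |S|\le m),
\]
the fraction of the $m$-sets containing $S$ that lie in $\FF$; note $\mu(\emptyset)=|\FF|/\binom{n}{m}$. Put $r:=\frac{nm\lambda}{\ep l}$ and call $\FF[S]$ \emph{$r$-spread} if $|\FF[S\cup W]|\le r^{-|W|}|\FF[S]|$ for every nonempty $W\subset X\setminus S$; the hypothesis $\lambda<\ep l/m^{2}$ is precisely the statement $r<n/m$, so $r$-spreadness is a non-vacuous condition. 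Beginning with $S=\emptyset$, repeat: if $\FF[S]$ is $r$-spread, stop and output $T=S$; otherwise pick nonempty $W\subset X\setminus S$ with $|\FF[S\cup W]|>r^{-|W|}|\FF[S]|$ and replace $S$ by $S\cup W$. Since the chosen $W$ keeps $\FF[S]\neq\emptyset$, we always have $|S|\le m$, so the procedure terminates within $m$ rounds; if it runs until $|S|=m$ it outputs $S\in\FF$, which is trivially an $(l,\lambda)$-extension generator because then $\FF[S]=\{S\}$ and $Ext(\FF[S],l)$ is all of $\binom{X}{l}[S]$.

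The size of $T$ is read off from $\mu$. For any $S$ and any $W\subset X\setminus S$ with $|S|+|W|\le m$, the identity $\binom{a}{b}\binom{a-b}{c-b}=\binom{a}{c}\binom{c}{b}$ gives
\[
\frac{\binom{n-|S|}{m-|S|}}{\binom{n-|S|-|W|}{m-|S|-|W|}}
=\frac{\binom{n-|S|}{|W|}}{\binom{m-|S|}{|W|}}
=\prod_{i=0}^{|W|-1}\frac{n-|S|-i}{m-|S|-i}
\;\ge\;\Bigl(\tfrac{n}{m}\Bigr)^{|W|},
\]
so each round multiplies $\mu$ by
\[
\frac{\mu(S\cup W)}{\mu(S)}
=\frac{|\FF[S\cup W]|}{|\FF[S]|}\cdot\frac{\binom{n-|S|}{m-|S|}}{\binom{n-|S|-|W|}{m-|S|-|W|}}
\;>\;r^{-|W|}\Bigl(\tfrac{n}{m}\Bigr)^{|W|}
=\Bigl(\tfrac{n}{mr}\Bigr)^{|W|}
=\Bigl(\tfrac{\ep l}{m^{2}\lambda}\Bigr)^{|W|}.
\]
Since $\mu\le 1$ throughout and $\mu(\emptyset)=|\FF|/\binom{n}{m}$, the output obeys $\bigl(\ep l/(m^{2}\lambda)\bigr)^{|T|}\cdot|\FF|/\binom{n}{m}\le 1$, i.e.\ $|T|\le\bigl[\ln\binom{n}{m}-\ln|\FF|\bigr]\big/\ln\frac{\ep l}{m^{2}\lambda}$, the asserted bound; the degenerate case $|T|=m$ is covered because there $\mu(T)=1$ and the same inequality forces $m\le\bigl[\ln\binom{n}{m}-\ln|\FF|\bigr]\big/\ln\frac{\ep l}{m^{2}\lambda}$.

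What remains is the stopping criterion, which I expect to be the main obstacle: \emph{if $\FF[T]$ is $r$-spread then $T$ is an $(l,\lambda)$-extension generator.} Writing $n'=n-|T|$, $m'=m-|T|$, $l'=l-|T|$ and viewing $\GG=\FF[T]$ as a family of $m'$-sets in $X\setminus T$, this says that a uniformly random $l'$-subset of $X\setminus T$ contains a member of the $r$-spread family $\GG$ with probability at least $1-e^{-\lambda}$ — a statement that a spread family survives a sparse random restriction. The restriction density is $p=l'/n'$, whence $pr=\Theta(m\lambda/\ep)$ is large. I would prove it by re-examining the probabilistic argument underlying \refeq{eqMy6} and using $r$-spreadness to control the union bound that appears there: spreadness suppresses the over-counting responsible for the factor $m'!$ in the denominator of \refeq{eqMy6}, leaving a failure probability of the shape $m'\exp\!\bigl(-\Omega(pr)\bigr)$, which is below $e^{-\lambda}$ once $\ep$ is fixed as a sufficiently small absolute constant. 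Establishing this spread-sensitive sharpening of \refeq{eqMy6} with the correct absolute constant — so that the failure probability is pinned against $e^{-\lambda}$ and the spread threshold is exactly $\frac{nm\lambda}{\ep l}$ — is the technical heart of the proof; the dichotomy, the telescoping count, and the binomial bookkeeping are routine by comparison.
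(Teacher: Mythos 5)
Your construction and size bound for $T$ are correct and coincide with the paper's: the paper likewise takes a maximal $T$ with $|\FF[T]|\ge |\FF|\,b^{-|T|}$, observes that the resulting family satisfies the $\Gamma\lp b\rp$-condition in $X-T$ (your ``$r$-spread'' condition with $b=r$), and extracts $|T|\le \kappa\lp\FF\rp\big/\ln\frac{\ep l}{m^2\lambda}$ from exactly the telescoping inequality $b^{-j}(n/m)^j\le e^{\kappa\lp\FF\rp}$ that your potential $\mu$ encodes. That part is fine.

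The gap is the stopping criterion, which you explicitly leave unproven: the claim that an $r$-spread $\FF[T]$ satisfies $|Ext\lp\FF[T],l\rp|\ge{n-|T|\choose l-|T|}\lp 1-e^{-\lambda}\rp$ \emph{is} the theorem, and ``re-examining the probabilistic argument underlying \refeq{eqMy6} so that spreadness suppresses the $m!$'' is not an argument --- \refeq{eqMy6} is a first-moment/union-bound estimate and it is not evident how spreadness removes the offending factor; what you are proposing is essentially a spread lemma (an $r$-spread family survives a random restriction of density $p$ with failure probability $\exp\lp-\Omega(pr)\rp$), which is a substantial result in its own right and cannot be waved in. The paper closes this step by an entirely different, two-stage device. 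First, \refth{EGT4} is a second-moment argument: under the $\Gamma_w\lp b\rp$-condition one bounds $\|\DD\|=\sum_Y\|{Y\choose m}\|^2$ (\reflm{DoubleMarkLemma}) and concludes via \reflm{WeightBounds} that all but a $2\gamma^{-1/3}$ fraction of the $l_0$-sets $Y$ contain roughly the expected number of members of $\FF[T]$ --- in particular at least one --- where $l_0=\lf l\sqrt\ep/\lambda\rf$ is deliberately much smaller than $l$. A variance bound can only deliver a constant density, never $1-e^{-\lambda}$ for large $\lambda$. Second, the density is amplified from $1-2\gamma^{-1/3}$ at length $l_0$ to $1-e^{-\lambda}$ at length $l$ by \reflm{PhaseII}, which squares the sparsity of the complement each time the set size doubles; the choice of $l_0$ is tuned so that $\log_2\frac{l-j}{l_0-j}$ doublings suffice to push the complement's sparsity above $\lambda$. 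Your proposal contains neither the second-moment computation nor the amplification mechanism, so the heart of the proof is missing.
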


We will also confirm it in Section 2. The theorem could help us understand the structure of $Ext \lp \FF, l \rp$: for some large family $\FF$, we can find bounded sets \\$T_1, T_2, \ldots, T_k$ such that $Ext \lp \FF, l \rp$ is close to $\bigcup_{i \in [k]} {X \choose l} \lbr T_i \rbr$.

In addition, an alternative proof has been given \cite{monotone} with the theorem that the monotone complexity of detecting cliques in an $n$-vertex graph is exponential. For any given polynomial-sized monotone circuit $\CC$ for the $k$-clique problem ($k=n^\ep$ for some constant $\ep\in (0, 1)$), the proof explicitly constructs a graph containing no $k$-clique for which $\CC$ returns true.
The standard method to show the exponential complexity uses the sunflower lemma or its variant with random vertex coloring \cite{Raz, AB87}.

\subsection{To Show \refth{3SF}}

Given $\FF \subset {X \choose m}$, we first parition $X$ into equal sized disjoint sets $X_1, X_2, \ldots, X_r$ ($r \simeq m^{\frac{1 - \delta}{2}}$) such that $ \frac{m}{2r} < |U \cap X_j| < \frac{2m}{r}$ for every $j \in [r]$ and most $U \in \FF$. Find such $X_j$ by the claims we show in Section 3. Then we will inductively construct three families $\FF_i$ ($i \in [3]$) of $U$ for each $j$ such that $U_i \cap \bigcup_{j' \in [j]} X_{j'}$ are mutually disjoint for any three $U_i \in \FF_i$. The recursive invariant is verified by claims closely related to \refth{EGT}, which we will prove in the following section.

\section{Proof of \refth{EGT} And Other Facts}

\subsection{A Structural Lemma} 
Denote 
$\underbrace{\UU \times \UU \times \cdots \times \UU}_g$ by $\UU^g$ for $\UU \subset 2^X$ and $g \in \Z_{>0}$,  also writing 
\[
union \lp \bs{U} \rp = \bigcup_{p=1}^g U_p, \quad 
\textrm{for~} \bs{U} = \lp U_1, U_2, \ldots, U_g \rp  \in \UU^g. 
\]
Let $w: \lp 2^X \rp^g \rightarrow \R_{\ge 0}$ and $m \in [n]$ be given in addition to $g$. These define the {\em norm $\left\| \UU \right\|$ of $\UU$} 
and {\em sparsity $\kappa \lp \FF \rp$} of $\FF \subset {X \choose m}$ by 
\[
\| \UU \|  = \lbr \sum_{\bs{U} \in \UU^g} w \lp \bs{U}  \rp \rbr^{\frac{1}{g}}, 
\eqand 
\kappa \lp \FF \rp = \ln {n \choose m}- \ln \| \FF \|, 
\] 
respectively.

Given such an $\FF$, and numbers $l \in [n]- [m]$ and $j \in \Z_{\ge 0}$, denote 
\( 
&& 
\PP_{j, g} = \lb \bs{U}~:~ \bs{U} \in \FF^g,~ 
\left| union\lp \bs{U} \rp  \right| = gm-j 
\rb, 
\\ &&
\DD_g = \lb (\bs{U}, Y) ~:~ \bs{U} \in \FF^g,~ 
Y \in {X \choose l},~ union(\bs{U}) \subset Y 
\rb, 
\\ &&
\| \PP_{j, g} \| = \sum_{\bs{U}\in \PP_{j, g}} w  \lp \bs{U} \rp,
\eqand 
\| \DD_g \| = \sum_{(\bs{U}, Y) \in \DD_g} w  \lp \bs{U} \rp, 
\)
extending the norm $\| \cdot \|$ for $\PP_{j, g}$ and $\DD_g$, for which we say $w$ {\em induces} $\| \cdot \|$ and also the sparsity $\kappa$. 
The family $\FF$ satisfies the {\em $\Gamma_g (b, h)$-condition on $\| \cdot \|$} $\lp b, h  \in \R_{>0} \rp$  if 
\( 
&& 
\| \UU \|  = \lbr \sum_{\bs{U} \in \lp \UU \cap \FF \rp^g} w \lp \bs{U}  \rp \rbr^{\frac{1}{g}}, \quad 
\textrm{for all~} \UU \subset 2^X, 
\\ \textrm{and} &&
\| \PP_{j, g} \| 
< h b^{-j} 
\| \FF \|^g, \quad 
\textrm{for every~} j \in [(g-1)m]. 
\)
We may drop the subscript $g$ if it is obvious from the context, so $\PP_{j, g}$ can be written as $\PP_j$, $\Gamma_g (b,  h)$ as $\Gamma (b, h)$ etc.

\medskip

In this subsection, we prove the following lemma that is a structural claim we will use to show \refth{3SF}, and to derive \refth{EGT}.

\begin{lemma} \label{TildeGSum}
Let 
\be{i)} 
\item $X$ be the universal set weighted by $w: \lp 2^X \rp^g \rightarrow \R_{\ge 0}$ for some $g \in \Z_{\ge 2}$, 
\item $l \in [n]$, $m \in [l-1]$, and $h, \gamma \in \R_{>0}$, such that 
$\gamma$ and $\frac{l}{gm}$ are both sufficiently large, 
\item and $\FF \subset {X \choose m}$ satisfy the $\Gamma_g  \lp \frac{4 \gamma n}{l},~h \rp$-condition on the norm $\| \cdot \|$ induced by $w$.  
\ee 
Then 
\[ 
\| \DD_g  \| < 
\frac{\lp 1 + \frac{h}{\gamma} \rp {n \choose l}{l \choose m}^g}{{n \choose m}^g} 
\| \FF \|^g 
.
\qed 
\]
\end{lemma} 

\medskip 

Since 
\[
\| \DD_g \| 
= \sum_{(\bs{U}, Y) \in \DD_g} w \lp \bs{U} \rp
=\sum_{Y \in{X \choose l}} 
~\sum_{\bs{U} \in \lbr \FF \cap {Y \choose m} \rbr^g} 
w \lp \bs{U} \rp, 
\]
it means: 

\begin{corollary} \label{EGTTildeG}
For such objects and $\ep \in (0, 1)$, there are $\lc \lp 1 - \ep \rp {n \choose l} \rc$ sets $Y \in {X \choose l}$ such that
\[
\sum_{\bs{U} \in \lbr \FF \cap {Y \choose m} \rbr^g} w \lp \bs{U}  \rp
< 
\frac{\lp 1 + \frac{h}{\gamma} \rp {l \choose m}^g}{\ep {n \choose m}^g}
\sum_{\bs{U} \in \FF^g} w \lp \bs{U}  \rp. 
\qed
\]
\end{corollary}

\medskip

Write 
\[
b = \frac{4 \gamma n}{l}, \eqand 
v(n') = {n' \choose m}^{-g+1}  \hspace*{3mm} \prod_{p=1}^{g-1} {n'-pm \choose m},  
\]
for $n' \in [n] - [g m]$ for the proof.  Observe the five remarks. 
\be{A)} 
\item $\PP_j = \emptyset$ if $j > (g-1)m$, so 
\(
\|  \DD  \|  &=& 
\sum_{Y \in{X \choose l}} ~\sum_{\bs{U} \in {Y \choose m}^g} 
w \lp \bs{U} \rp
= 
\sum_{Y \in {X \choose l}} \left\| {Y \choose m} \right\|^g 
\nexteqline
\sum_{j=0}^{(g-1)m} \| \PP_j \| {n- gm+j \choose l - gm+j}
\\ &<&
h  \sum_{j=1}^{(g-1)m} b^{-j}  {n \choose m}^g e^{- g\kappa\lp \FF \rp}
{n-gm +j \choose l-gm+j}, 
\)  
by the $\Gamma_g \lp b, h\rp$-condition of $\FF$, and $\| \FF \|^g = {n \choose m}^g e^{-g \kappa \lp \FF \rp}$. 

\item  
$
{v(l) \le v(n)} 
$,  
since 
\(
&&
\prod_{i=0}^{m-1} \frac{1-\frac{pm}{l-i}}{1-\frac{pm}{n-i}} \le 1, \quad 
\textrm{for~} p \in [g-1], 
\sothat 
\frac{{l- pm \choose m}{l \choose m}^{-1}}{{n-pm \choose m}{n \choose m}^{-1}} \le 1, 
\quad \Rightarrow \quad 
\frac{v(l)}{v(n)} \le 1. 
\)

\item By the identity ${x \choose z}{x-z \choose y-z}= {x \choose y}{y \choose z}$, 
\( 
&&
{n \choose m}^g {n-gm \choose l-gm}  
= 
\frac{1}{v(n)}  {n-gm \choose l-gm} \prod_{p=0}^{g-1} {n-pm \choose m} 
\nexteqline 
\frac{{l-(g-1)m \choose m}}{v(n)}  {n-(g-1)m \choose l-(g-1)m} 
\prod_{p=0}^{g-2} {n-pm \choose m} 
\nexteqline 
\frac{{l-(g-1)m \choose m} {l-(g-2)m \choose m}}{v(n)}  {n-(g-2)m \choose l-(g-2)m} 
\prod_{p=0}^{g-3} {n-pm \choose m} 
\nexteqline 
\cdots =  
\frac{\prod_{p=0}^{g-1} {l-pm \choose m}}{v(n)} {n \choose l}
\\ &=&
\frac{v(l)}{v(n)} {n \choose l} {l \choose m}^g
\le 
{n \choose l} {l \choose m}^g. 
\)

\item So, 
\( 
\| \PP_0 \| {n-gm \choose l-gm}
&\le&
\| \FF \|^g  {n-gm \choose l-gm} 
= {n \choose m}^g {n-gm \choose l-gm}   e^{- g \kappa \lp \FF \rp} 
\\ &\le & 
{n \choose l} {l \choose m}^g e^{- g \kappa \lp \FF \rp} . 
\)

\item For each $j \in [(g-1)m]$,  
\[
{n-gm +j \choose l-gm+j} 
= {n-gm \choose l-gm} \prod_{i=0}^{j-1} \frac{n-gm+j-i}{l-gm+j-i}  
<
\lp \frac{2n}{l} \rp^{j}
{n-gm \choose l-gm},  
\]
since $l$ and $n$ are both sufficiently larger than $gm$. 
\qed 
\ee

\medskip

It suffices to show by the remarks that 
\beeq{eqTildeG} 
\| \DD \| < \lbr  1  
+ h \sum_{j=1}^{(g-1)m} \lp 2 \gamma \rp^{-j}  
\rbr 
{n \choose l}{l \choose m}^g e^{-g \kappa \lp \FF \rp}, 
\eeq 
as its RHS is less than 
$
\frac{\lp 1 + \frac{h}{\gamma} \rp {n \choose l}{l \choose m}^g}{{n \choose m}^g}
\sum_{\bs{U} \in \FF^g} w \lp \bs{U}  \rp$. We see from A), C) and E) that 
\(
&&
\sum_{j=1}^{(g-1)m} \| \PP_j  \|  {n-gm +j \choose l-gm+j} 
\\ &<& \nonumber 
h  \sum_{j=1}^{(g-1)m} b^{-j}   {n \choose m}^g e^{- g\kappa\lp \FF \rp}
{n-gm +j \choose l-gm+j} 
\\ &<& \nonumber 
h e^{-g \kappa \lp \FF \rp} 
{n \choose m}^g  {n-gm \choose l-gm} 
\sum_{j=1}^{(g-1)m} \lp 2 \gamma  \rp^{-j} 
\\ &\le& \nonumber 
h e^{-g \kappa \lp \FF \rp} 
{n \choose l}{l \choose m}^g 
\hspace*{2mm}
\sum_{j=1}^{(g-1)m} \lp 2 \gamma \rp^{-j}.  
\) 
Also by D), \refeq{eqTildeG} is confirmed completing the proof of \reflm{TildeGSum}.

\subsection{Primitive Weight of $X$ for $g=2$} 
Now let the weight $w: \lp 2^X \rp^2 \rightarrow \R_{\ge 0}$ for $g=2$ be defined by 
\[
(U_1, U_2) \mapsto \tilde{w}(U_1) ~\tilde{w} (U_2),  
\quad 
\textrm{for some~}
\tilde{w}: 2^X \rightarrow \R_{\ge 0}. 
\] 
We say that such an $X$ is {\em primitively weighted} by $w$, and that the weight $w$ is {\em primitive with $\tilde{w}$}. This subsection shows the following statement.

\begin{theorem} \label{EGT42} 
Let $X$ be primitively weighted inducing the norm $\| \cdot \|$. 
For every sufficiently small $\ep \in (0, 1)$, and $\FF \subset {X \choose m}$ satisfying the $\Gamma_2 \lp \frac{4 \gamma n}{l},~1  \rp$-condition on $\| \cdot \|$ for some $l \in [n]$, $m \in [l]$, and 
$
\gamma \in \lbr \ep^{-2},~ l m^{-1} \rbr 
$, there are $\lc {n \choose l} \lp 1-  \ep \rp \rc$ sets $Y \in {X \choose l}$ such that
\[
\lp 1 - \sqrt{\frac{2}{\ep \gamma}} \rp
\frac{{l \choose m}}{{n \choose m}}
\left\| \FF \right\|
<
\left\| {Y \choose m} \right\|
<
\lp 1 + \sqrt{\frac{2}{ \ep \gamma}}  \rp
\frac{{l \choose m}}{{n \choose m}}
\left\| \FF \right\|.
\qed
\]
\end{theorem}

\medskip 

The rest of this subsection proves the theorem. 
Given such $\ep, m, l, \gamma$ and $\FF$, use the same $\DD$ and $\PP_j$ as Section 2.1. Find the following remarks. 

\be{A)} \setcounter{enumi}{5} 
\item Let the weight $w$ of $X$ be primitive with $\tilde{w}$, then 
\[
\left\| \GG \right\|  
= \sum_{V \in \GG 
} 
\tilde w \lp U \rp, 
\quad 
\textrm{for any~} \GG \subset 2^X, 
\] 
since 
$
\lbr \sum_{V \in \GG 
} 
\tilde w \lp U \rp \rbr^2 
= 
\sum_{\bs{U} \in \GG^2} w \lp \bs{U} \rp
= 
\| \GG \|^2 
$. 

\item By this {\em linearity of the norm $\| \cdot \|$ for primitive weight}, 
\[
\sum_{Y \in {X \choose l}} \left\| {Y \choose m} \right\| 
= 
\| \FF \| {n - m \choose l- m} 
= 
{n \choose m} {n - m \choose l- m} e^{-\kappa \lp \FF \rp}
= {n \choose l} {l \choose m} e^{-\kappa \lp \FF \rp}. 
\] 

\item 
$ 
\| \DD \| = \sum_{Y \in {X \choose l}} \left\| {Y \choose m} \right\|^2 
> 0
$, since 
\[
\|  \DD  \|  = 
\sum_{j=0}^m \| \PP_j \| {n- 2m+j \choose l - 2m+j}
< 
\sum_{j=0}^m h b^{-j} \| \FF \|^2  {n- 2m+j \choose l - 2m+j}
\] 
as in A). So $\| \FF \|>0$ meaning $\| \DD \|>0$ by definition. 

\item 
By \refeq{eqTildeG} for $g=2$,  
\[
\| \DD \| <
\frac{e^{-2 \kappa \lp \FF \rp}}{1- (2 \gamma)^{-1}}
 {n \choose m}{l \choose m}^2. 
\qed 
\] 
\ee 

\medskip 

We find another property on $\| \DD \|$ below. The statement is general holding for any $\FF$ that meets the conditions. 

\begin{lemma} \label{WeightBounds}
Let 
\be{i)}
\item $X$ be primitively weighted by $\lp 2^X \rp^2 \rightarrow \R_{\ge 0}$ 
inducing the norm $\| \cdot \|$, 
\item $l \in [n]$, $m \in [l]$, $t \in \R_{>0}$, 
\item $\FF \subset {X \choose m}$ such that 
\(
0<
\| \DD \|
\le t {n \choose l} {l \choose m}^2 e^{- 2 \kappa \lp \FF \rp},
\)
\item and $u, v \in \R_{>0}$ with 
\[
u<1, \hspace*{5mm}
u {n \choose l} \in \Z, \eqand
t < 1 + \frac{u (v-1)^2}{1-u}. 
\]
\ee 
The two statements hold.
\begin{enumerate} [a)]
\item If $v \ge 1$, more than $(1-u) {n \choose l}$ sets $Y \in {X \choose l}$ satisfy
$
\left\|  {Y \choose m} \right\| <   v  {l \choose m} e^{-\kappa \lp \FF \rp}.
$
\item If $v \le 1$, more than $(1-u) {n \choose l}$ sets $Y \in {X \choose l}$ satisfy
$
\left\|  {Y \choose m} \right\| >   v  {l \choose m} e^{-\kappa \lp \FF \rp}.
$
\end{enumerate}
\end{lemma}
\begin{proof}
a): Put
\[
z = e^{-\kappa \lp \FF \rp}, \eqand
x_j = \left\| {Y_j \choose m} \right\|,
\]
where $Y_j$ is the $j$th $l$-set in ${X \choose l}$. 
Suppose to the contrary that
$x_j \ge v z {l \choose m}$ if $1 \le j \le u {n \choose l}$.

Noting 
$\sum_{1 \le j \le {n \choose l}} x_j = z {n \choose l}{l \choose m}>0$ from G) and $\| \DD \|>0$, 
let $y \in (0, 1)$ satisfy
\[
\sum_{1 \le j \le u {n \choose l}} x_j  
= y z {n \choose l}{l \choose m},
\]
so
$
y \ge uv.
$ 
Find that 
\(
&&
\sum_{1 \le j \le u {n \choose l}} x_j^2
\ge \lbr \frac{y z {n \choose l}{l \choose m}}{u {n \choose l}} \rbr^2  u {n \choose l}
= \frac{y^2 z^2}{u}  {n \choose l}{l \choose m}^2,
\\ \textrm{and} &&
\sum_{u {n \choose l} < j \le {n \choose l}} x_j^2
\ge \lbr \frac{(1-y) z {n \choose l}{l \choose m}}{(1-u) {n \choose l}} \rbr^2  (1-u) {n \choose l}
= \frac{(1-y)^2 z^2 }{1-u} {n \choose l}{l \choose m}^2,
\)
meaning
\beeqn
&& \label{eqWeightBounds}
\| \DD \| = \sum_{Y \in {n \choose l}} \left\| {Y \choose m} \right\|^2 \ge f z^2 {n \choose l}{l \choose m}^2,
\eqwhere
f = \frac{y^2}{u} + \frac{(1-y)^2}{1-u}.
\eeqn
From $y \ge u v  \ge u$,
\beeq{eq2WeightBounds}
f \ge uv^2 + \frac{(1-uv )^2}{1-u} = 1 + \frac{u (v-1)^2}{1-u}>t.
\eeq
This contradicts the given condition proving a).

\medskip

b): Suppose $x_j \le  v z {l \choose m}$ if $1 \le j \le u {n \choose l}$. Use the same $y$ and $f$ so
$y \le uv$ and \refeq{eqWeightBounds}. These also imply \refeq{eq2WeightBounds} producing the same contradiction. Thus b).
\end{proof}

Set 
\(
t = \frac{1}{1- (2 \gamma)^{-1}},
\hspace*{5mm}
u = \frac{\lf \frac{\ep}{2} {n \choose l} \rf}{{n \choose l}},
\hspace*{5mm}
v = 1 + \frac{u}{\lp \frac{\ep}{2} \rp^{\frac{3}{2}}  \sqrt \gamma}, 
\eqand v' = 1- \frac{u}{\lp \frac{\ep}{2} \rp^{\frac{3}{2}}  \sqrt \gamma}. 
\)
Then 
\[
1 + \frac{u (v-1)^2}{1-u}= 1 + \frac{u (v'-1)^2}{1-u} 
= 1 + \gamma^{-1} \frac{u^3}{\lp \frac{\ep}{2} \rp^3 (1-u)}  
>t, 
\]
since $u > \frac{\ep}{2} - \ep^2$ from 
$\ep^{-2} \le \gamma \le l < {n \choose l}$. Here $l<n$ is assumed as the theorem is trivially true if $l=n$. 
By I) and \reflm{WeightBounds},
\[
v'   {l \choose m} e^{-\kappa \lp \FF \rp} <
\left\|  {Y \choose m} \right\|
<   v  {l \choose m} e^{-\kappa \lp \FF \rp},
\]
for some $\lp 1- 2 u \rp {n \choose l}$ sets $Y \in {X \choose l}$.
As $e^{-\kappa \lp \FF \rp} = \| \FF \|{n \choose m}^{-1}$, this means 
there are $\lc {n \choose l} \lp 1-  \ep \rp \rc$ sets $Y \in {X \choose l}$ such that
\[
\lp 1 - \sqrt{\frac{2}{\ep \gamma}}  \rp
\frac{{l \choose m}}{{n \choose m}}
\left\| \FF \right\|
<
\left\| {Y \choose m} \right\|
<
\lp 1 +\sqrt{\frac{2}{\ep \gamma}} \rp
\frac{{l \choose m}}{{n \choose m}}
\left\| \FF \right\|, 
\]
completing the proof of \refth{EGT42}.

\subsection{Deriving \refth{EGT}}
Given an $\FF$, let us assume for a while that $X$ is 
primitively weighted with $\tilde{w}: U \mapsto |\FF[U]|$. 
The norm of $\GG \subset 2^X$ and sparsity of $\FF$ in this default case are 
\[ 
\| \GG \| = \sum_{U \in \GG} |\FF[U]|, \eqand  
\kappa \lp \FF \rp = \ln {n \choose m} - \ln |\FF|, 
\] 
respectively, by the linearity of $\| \cdot \|$. 
The latter depends on $X$ as well as $|\FF|$. Generalize the default sparsity to any uniform family $\GG \in {X' \choose m'}$ of $m'$-sets in the universal set $X' \subset X$ $\lp m' \in \lbr |X' | \rbr \rp$ to write $\kappa \lp \GG \rp = \ln {|X'| \choose m'} - \ln |\GG|$. 

\medskip 

Remarks. 

\begin{enumerate} [A)] \setcounter{enumi}{9} 
\item The notation could be useful to express $\ln |\GG|$: for example, $|T| \le \kappa \lp \FF \rp\bigr/ \ln \frac{\ep l}{m^2 \lambda}$ for \refth{EGT} and $\kappa \lp \FF_{\boldsymbol X}\rp < \kappa \lp \FF \rp +m$ in \reflm{Split3} we will see in the next section. 
\item As we use on the bottom of the subsection, the same definition can apply to {\em the projection $\GG_{m'}$ of $\GG$ onto $X'$}, $i.e.$, $\GG_{m'} = \lb U \cap X'~:~ U \in \GG,~|U \cap X'|=m' |\rb$. 
\item $\kappa \lbr Ext \lp \FF, l \rp \rbr \le \kappa \lp \FF \rp$ for $l \in [n]-[m] $. For there are $|\FF| {n-m \choose l-m} = {n \choose m}e^{-\kappa \lp \FF \rp} {n-m \choose l-m} \\= {n \choose l}{l \choose m} e^{-\kappa \lp \FF \rp}$ set pairs $(S, T)$ such that $S \in \FF$, $T \in {X \choose l}$ and $S \subset T$. This means $|Ext \lp \FF, l \rp| \ge {n \choose l}e^{-\kappa \lp \FF \rp}$ leading to the claim.
\item Join a $p$-set $P$ to $X$ such that $P \cap X=\emptyset$.  The sparsity of $Ext \lp \FF, m+p \rp$ in the universal set $X \cup P$ is at most $\kappa \lp \FF \rp$ in $X$ since
\[
|Ext \lp \FF, m+p \rp|
\ge \sum_{j=0}^p {n \choose m+j}e^{-\kappa \lp \FF \rp} {p \choose p-j}  = {n+p \choose m+p} e^{-\kappa \lp \FF \rp} .
\]

\item The following lemma is proven in \cite{monotone} and Appendix 1.
\end{enumerate}

\begin{lemma} \label{PhaseII} 
For $\FF \subset{X \choose m}$ such that $m \le \frac{n}{2}$,
\[
\kappa \lbr {X \choose 2m} - Ext \lp \FF, 2m \rp \rbr
\ge
2 \kappa \lbr {X \choose m} - \FF \rbr. \qed
\]
\end{lemma}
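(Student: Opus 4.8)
The plan is to pass to complements and recognise the statement as an instance of the Kruskal--Katona theorem. Write $\bar\FF = {X \choose m} - \FF$ and $\GG = {X \choose 2m} - Ext(\FF, 2m)$. By the definition of $Ext$, a $2m$-set $T$ lies in $\GG$ exactly when no member of $\FF$ is a subset of $T$, i.e.\ when every $m$-subset of $T$ belongs to $\bar\FF$; thus $\GG = \{T \in {X \choose 2m} : {T \choose m} \subseteq \bar\FF\}$. Since $\kappa(\cdot)$ of a family of $p$-sets equals $\ln {n \choose p} - \ln(\text{its cardinality})$, the inequality to prove is equivalent to
\[
|\GG| \cdot {n \choose m}^2 \;\le\; {n \choose 2m} \cdot |\bar\FF|^2 .
\]
If $\GG = \emptyset$ or $\bar\FF = \emptyset$ the claim is immediate from the conventions, and the hypothesis $m \le n/2$ only ensures that ${n \choose 2m}$ and the set of $2m$-sets are not vacuous; so assume $1 \le m$, $2m \le n$, and both families nonempty.

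Next I would observe that the \emph{iterated $m$-shadow} of $\GG$ — the family of all $m$-sets contained in some member of $\GG$ — is contained in $\bar\FF$, directly from the description of $\GG$. Writing $|\GG| = {x \choose 2m}$ for a real $x \ge 2m$ (possible since $|\GG| \ge 1$) and applying the Lov\'asz form of the Kruskal--Katona theorem $m$ times, lowering the top level from $2m$ down to $m$, gives $|\bar\FF| \ge |\partial^{(m)}\GG| \ge {x \choose m}$. Write also $|\bar\FF| = {y \choose m}$ for a real $y$; then $m \le x \le y \le n$, the last inequality because $|\bar\FF| \le {n \choose m}$ and ${z \choose m}$ is increasing in $z \ge m$.

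It remains to convert $|\GG| = {x \choose 2m}$, ${x \choose m} \le |\bar\FF|$, and $x \le n$ into the displayed bound. For this I would use the elementary monotonicity of
\[
g(z) \;=\; \frac{{z \choose 2m}}{{z \choose m}^2} \;=\; \frac{1}{{2m \choose m}}\prod_{i=0}^{m-1}\frac{z-m-i}{z-i},
\]
each factor $1 - \frac{m}{z-i}$ being positive and increasing in $z$ for $z \ge 2m$, so $g(x) \le g(n)$. Then
\[
|\GG| \;=\; {x \choose 2m} \;=\; g(x)\,{x \choose m}^2 \;\le\; g(n)\,{y \choose m}^2 \;=\; \frac{{n \choose 2m}}{{n \choose m}^2}\,|\bar\FF|^2 ,
\]
and taking logarithms gives $\kappa(\GG) \ge 2\kappa(\bar\FF)$, as required.

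I expect the only genuine subtlety to be the careful invocation of the real-parameter (Lov\'asz) version of Kruskal--Katona for iterated shadows together with the bookkeeping at the degenerate boundary — when $|\GG|$ or $|\bar\FF|$ is so small that the representing reals $x,y$ sit near $2m$ or $m$, or when either family is empty; the shadow containment and the monotonicity of $g$ are routine. A more self-contained alternative would replace the appeal to Kruskal--Katona by a direct compression argument on $\GG$, but the shadow estimate above is the cleanest route.
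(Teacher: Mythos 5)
Your proof is correct, but it takes a genuinely different route from the paper's. You pass to complements, identify $\GG = {X\choose 2m} - Ext(\FF,2m)$ as the family of $2m$-sets all of whose $m$-subsets lie in $\overline{\FF} = {X\choose m}-\FF$, bound $|\GG|$ via the Lov\'asz form of Kruskal--Katona applied to the $m$-fold shadow (which is contained in $\overline{\FF}$), and finish with the monotonicity of $z \mapsto {z\choose 2m}\big/{z \choose m}^2$ on $[2m,\infty)$; all of these steps check out, including the iteration of the shadow bound and the factorisation of $g$. The paper instead works on the $Ext$ side by double counting: for each $m$-set $S \notin \FF$ it selects a residue class $\FF_j = \{T - S : T\in\FF,\ |T-S|=j\}$ whose sparsity in $X-S$ does not exceed $\kappa(\FF)$, applies its elementary extension lower bound (Remark A of Section 2.3) to produce at least $(1-z){n-m\choose m}$ disjoint partners $U$ with $S\cup U \in Ext(\FF,2m)$, where $z = e^{-\kappa[{X\choose m}-\FF]}$, then counts all disjoint pairs $(S,U)$ to conclude $|Ext(\FF,2m)| \ge {n\choose 2m}(1-z^2)$ and complements. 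Your version imports a heavier external theorem but is conceptually cleaner, reducing everything to two transparent ingredients (shadow containment and monotonicity of $g$); the paper's version is self-contained, using nothing beyond the counting already set up in Section 2.3. Both yield exactly the same quantitative conclusion $|{X\choose 2m}-Ext(\FF,2m)| \le {n\choose 2m}z^2$. The only points worth making explicit in a final write-up are the one-line check that $\overline{\FF}=\emptyset$ forces $\GG=\emptyset$ (so that degenerate case is genuinely harmless) and the re-parametrisation of the shadow size at each of the $m$ Kruskal--Katona steps; neither is a gap.
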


\medskip

Assume $|\FF| > b^m$ for some $b \in \R_{\ge 1}$. There exists $T \subset X$ such that $|T|<m$,  $|\FF[T]| \ge |\FF| b^{-|T|}$, and $|\FF[T \cup S]| < b^{m-|T \cup S|}$ for any nonempty $S \subset X - T$. We use such a family $\FF[T]$ projected onto the universal set $X-T$ in place of $\FF$ in our proof of \refth{3SF}. 
Observe that the $\FF$ satisfies not only the $\Gamma(b)$-condition,  but also the $\Gamma_2 \lp b m^{-1},~ 1 \rp$-condition on $\| \cdot \|$ since 
\[ 
\| \PP_j \| 
= 
\sum_{U_1, U_2 \in \FF \atop |U \cap V| =j}  \tilde{w} (U_1) \tilde{w} (U_2) 
\le
\sum_{S \in {X \choose j}} |\FF[S]|^2
<  |\FF|^2 {m \choose j} b^{-j},
\]
for each $j \in [m]$.

By \refth{EGT42}:

\begin{corollary} \label{EGT4Cor}
Let $X$ be the universal set of cardinality $n$, $m \in [n-1]$, $l \in [n] - [m]$ and $\gamma \in \R_{>0}$ be sufficiently large not exceeding $\frac{l}{m}$. 
For any $\FF \subset {X \choose m}$ satisfying the $\Gamma \lp \frac{4  \gamma nm}{l} \rp$-condition, there are $\lc {n \choose l} \lp 1- \frac{2}{\sqrt[3] \gamma} \rp \rc$ sets $Y \in {X \choose l}$ such that
\[
\frac{{l \choose m} | \FF |}{{n \choose m}}
\lp 1 - \frac{1}{\sqrt[3] \gamma} \rp
<
\left| \FF \cap {Y \choose m} \right|
<
\frac{{l \choose m} | \FF |}{{n \choose m}}
\lp 1 + \frac{1}{\sqrt[3] \gamma} \rp. \qed
\]
\end{corollary}

\medskip

We show \refth{EGT} from the corollary. Given $m, l, \lambda$, sufficiently small $\ep$,  and $\FF$ as the statement, set
\[
l_0 = \lf \frac{l \sqrt \ep}{\lambda} \rf, \hspace*{5mm}
\gamma = \frac{1}{\sqrt[4] \ep}, \eqand
b = \frac{4 \gamma m n}{l_0}.
\]
Then $\gamma$ is sufficiently large and less than $\frac{l_0}{m^2}$ since
$
1 < \lambda < \frac{\ep l}{m^2}
$.

There exists a set $T$ such that $|T| \le \kappa \lp \FF \rp \Bigr/ \ln \frac{\ep l}{m^2 \lambda}$ and $\FF[T]$ satisfies the $\Gamma \lp b \rp$-condition in $X-T$: because the cardinality $j$ of such $T$ satisfies
\(
&& 
{n \choose m}  e^{-\kappa \lp \FF \rp} b^{-j}
= 
|\FF| b^{-j} \le |\FF[T]| \le {n-j \choose m-j},
\\ &\Rightarrow&
b^{-j}\lp \frac{n}{m} \rp^j
\le
b^{-j} \prod_{j'=0}^{j-1} \frac{n-j'}{m-j'}
=
\frac{{n \choose m}}{{n-j \choose m-j}} b^{-j}
\le e^{\kappa \lp \FF \rp},
\\ &\Rightarrow&
j \le \frac{\kappa \lp \FF \rp}{\ln \frac{\ep l}{m^2 \lambda}}.
\)
Assume $j<m$, otherwise the desired claim is trivially true.

Apply \refco{EGT4Cor} to $\FF[T]$ in the universal set $X-T$ noting
$\frac{l_0}{m^2} \le \frac{l_0-j}{(m-j)^2}$ and
$b \ge \frac{4 \gamma (m-j)(n-j)}{l_0-j}$.
We see 
\[
\left| Ext \lp \FF [T], l_0 \rp \right|
>
{n-j \choose l_0-j} \lp 1- \frac{2}{\sqrt[3] \gamma} \rp,
\]
from which
\[
\left| Ext \lp \FF[T], l \rp \right|
>
{n-j \choose l-j} \lp 1- e^{-\lambda} \rp,
\]
proving \refth{EGT}.  The truth of the last inequality is due to \reflm{PhaseII}: as $\frac{l - j}{l_0 -j} \ge \lambda \ep^{-1/2}$, it means
\[
\kappa \lbr {X \choose l}[T] - Ext \lp \FF[T], l \rp \rbr
\ge 2^{\lf \log_2 \frac{l-j}{l_0-j} \rf}~
\kappa \lbr {X \choose l_0}[T] - Ext \lp \FF[T], l_0 \rp \rbr
> \lambda,
\]
in the universal set $X - T$ leading to the inequality.

\section{Splitting the Universal Set}
Given $m \in [n]$ with $m \big| n$ and $q \in [m]$, let 
\[
d = \frac{nq}{m}, \eqand 
r = \lf \frac{m}{q} \rf. 
\]
Assume for a while 
\[ 
r \in \Z_{\ge 2}, 
\quad \Rightarrow \quad 
d  = \frac{n}{r}. 
\] 
Denote 
\[
\XX_j := \lb (X_1, X_2, \ldots, X_j)~:~  \textrm{$X_i$ are mutually disjoint $d$-sets} \rb, 
\]
for $j \in [r]$. 	
Call an element of $\XX_r$ {\em $r$-split of $X$} noting the given $q$ decides $r$. 

When a $j$ is also given, define 
\[
\FF_{\boldsymbol X} = \lb U~:~ U \in \FF, \textrm{~and~}
|U \cap X_{i}| = q \textrm{~for every~}  i \in [j] \rb. 
\] 
for $\FF \subset {X \choose m}$ and $\bs{X} \in \XX_j$, and 
\[
\TT_{\FF, j} := \lb
\lp U, {\boldsymbol X}\rp ~:~{\boldsymbol X} \in \XX_j \textrm{~and~}
U \in \FF_{\boldsymbol X}
\rb. 
\] 
Let $X$ be primitively weighted with $\tilde{w}: 2^X \rightarrow \R_{\ge 0}$, inducing the norm $\| \cdot \|$ and sparsity $\kappa$. Extend $\| \cdot \|$ to write 
\[
\| \TT_{\FF, j} \|  = \sum_{(U, \bs{X}) \in \TT_{\FF, j}} \tilde{w} (U). 
\]
For $j=0$, let $\XX_0 = \lb \emptyset \rb$ and $\FF_{\emptyset} = \FF$ so  $\|\TT_{\FF, 0} \|=\|\FF\|$. Assume $\| \FF \|>0$.

\medskip

Prove the following lemma.

\begin{lemma}\label{Split1} 
\[
\left\| \TT_{\FF, j}  \right\|
=
{d \choose q}^j {n - dj \choose m-qj} e^{-\kappa \lp \FF \rp}
\prod_{i=0}^{j-1}
{n - di  \choose d},
\]
for every $j \in [0, r) \cap \Z$ and $\FF \subset {X \choose m}$. 
\end{lemma}
\begin{proof}
We show the claim by induction on $j$ with the trivial basis $j=0$. Assume true for $j$ and prove for $j+1$. Fix any ${\boldsymbol X} = \lp X_1, X_2, \ldots, X_j \rp \in \XX_j$ putting
\[
X' = X - \bigcup_{i=1}^j X_i, \quad 
n' = |X'|, \quad 
m'= m-j q, \eqand
\gamma_{\boldsymbol X} = 
\frac{\left\| \FF_{\boldsymbol X} \right \|}
{{d \choose q}^j {n' \choose m'}}.
\]
Also write
\[
{\boldsymbol X}' = \lp X_1, X_2, \ldots, X_j, X_{j+1} \rp,
\]
for a $d$-set $X_{j+1} \in {X' \choose d}$.

For each $U \in \FF_{\boldsymbol X}$, there are
$
{m' \choose q} {n'-m' \choose d-q}
$
sets $X_{j+1} \in {X' \choose d}$ such that $\left| U \cap X_{j+1} \right|=q$. So the sum of $\tilde{w}(U)$ for $\lp U, {\boldsymbol X}' \rp \in \TT_{\FF, j+1}$ constrained by the fixed $\bs{X}$ 
is
\(
{m' \choose q}{n' - m' \choose d-q} \left\| \FF_{\boldsymbol X} \right\|
&=&
{m' \choose q} {n' - m' \choose d-q}  \gamma_{\boldsymbol X}  {d \choose q}^j {n' \choose m'} 
\nexteqline
{n' \choose q} {n'-q \choose m'-q} {n' - m' \choose d-q}  \gamma_{\boldsymbol X}  {d \choose q}^j. 
\)
Here 
\[
{n'-q \choose m'-q} {n' - m' \choose d-q} 
= \frac{(n'-q)!}{(m'-q)!(d-q)! (n-m'-d+q)!} 
= {n'-q \choose d-q} {n'-d \choose m'-q}. 
\]
So the above equals 
\[
{n' \choose q} {n'-q \choose d-q} {n'-d \choose m'-q} 
\gamma_{\boldsymbol X}  {d \choose q}^j 
=
\gamma_{\boldsymbol X}  
{d \choose q}^{j+1}  
{n'-d \choose m'-q}  {n' \choose d}. 
\]
Note $n' \ge m' + d -q$ from $\frac{m}{q} = r \ge j+1$ and $n'= n-dj$. 

By induction hypothesis,
\(
&&
\sum_{{\boldsymbol X} \in \XX_j} \left\| \FF_{\boldsymbol X} \right\|
=\left\|  \TT_{\FF, j} \right\|
=
{d \choose q}^j {n' \choose m'} e^{-\kappa \lp \FF \rp}
\prod_{i=0}^{j-1}
{n- di  \choose d},
\sothat
\sum_{{\boldsymbol X} \in \XX_j}  \gamma_{\boldsymbol X}
=
e^{-\kappa \lp \FF \rp}
\prod_{i=0}^{j-1}
{n-di  \choose d}.
\)
Hence,
\(
\left\| \TT_{\FF, j+1} \right\|
&=& \sum_{{\boldsymbol X} \in \XX_j}  
\gamma_{\boldsymbol X}  
{d \choose q}^{j+1}  
{n'-d \choose m'-q}  {n' \choose d} 
\nexteqline
{d \choose q}^{j+1} {n - d(j+1) \choose m-q(j+1)} e^{-\kappa \lp \FF \rp}
\prod_{i=0}^{j}
{n-di  \choose d},
\)
proving the induction step. The lemma follows.
\end{proof}

It means
$
\left\| \TT_{\FF, r-1}  \right\| =
{d \choose q}^r e^{-\kappa \lp \FF \rp}
\left| \XX_{r-1} \right|.
$
By the natural bijection between $\XX_{r-1}$ and $\XX_r$,
\[
\sum_{\bs{X} \in \XX_r} \left\| \FF_{\bs{X}} \right\|
=
\left\| \TT_{\FF, r}  \right\| =
{d \choose q}^r e^{-\kappa \lp \FF \rp}
\left| \XX_r \right|
= 
\frac{{d \choose q}^r \| \FF \|}{{n \choose m}} 
\left| \XX_r \right|
\]
Considering the case $r = 1$ as well, we have:

\begin{corollary} \label{Split2} 
Let $X$ be primitively weighted inducing the norm $\| \cdot \|$. 
Given $m \in [n]$ and $q \in [m]$, let 
$r = m / q$ and $d = n/r$ be both positive integers. 
For a family $\FF \subset {X \choose m}$ with $\| \FF \|>0$, there exists an $r$-split ${\boldsymbol X}$ of $X$ such that
$
\left\| \FF_{\boldsymbol X} \right\| \ge
{d \choose q}^r \|\FF\| \Bigr/ {n \choose m}
$.
\qed 
\end{corollary}

Note that if $q$ does not divide $m$ where $|U \cap X_r| = q'  \in [q, 2q) \cap \Z$ for $U \in \FF$, 
some $\bs{X}$ meets 
$\left\| \FF_{\boldsymbol X} \right\| \ge
{d \choose q}^{r-1} {n- d(r-1) \choose q'} \|\FF\| \Bigr/ {n \choose m}
$ by the same argument. For $q=1$: 

\begin{corollary} \label{Split3} 
For a universal set $X$ primitively weighted inducing the sparsity $\kappa$, and any family $\FF \subset {X \choose m}$ with $m \big| n$ and finite $\kappa \lp \FF \rp$, there exists an $m$-split $\bs{X}$ of $X$ such that 
$
\kappa \lp \FF_{\boldsymbol X} \rp 
<
\kappa \lp \FF \rp + m.
$ 
\end{corollary}
\begin{proof} 
Since $
\kappa \lp \FF_{\boldsymbol X} \rp \le \ln {n \choose m} - 
\ln \lbr \lp \frac{n}{m} \rp^m e^{-\kappa \lp \FF \rp} \rbr
<
\kappa \lp \FF \rp + m 
$ by the standard estimate of a binomial coefficient that is also derived in Appendix 2. 
\end{proof}

\medskip

Let us now focus on the first case $j=1$ of the lemma. Relax the constraints on $q$ and $d$ to see the following statement.

\begin{corollary} \label{Split4} 
Let $X$ be primitively weighted inducing $\| \cdot \|$, $m, d \in [n]$ and $q \in [0, m] \cap \Z$ such that $n-d-m+q \ge 0$.  
For each $\FF \subset {X \choose m}$ and $\ep \in (0, 1)$, there exist no more than $\lf \ep {n \choose d} \rf$ sets $X_1 \in {X \choose d}$ each with 
\[
\left\| \FF_{\bs{X}} \right\| > \frac{{n-d \choose m-q}{d \choose q}}{\ep {n \choose m}} \| \FF \|, 
\eqwhere 
\bs{X} = \lp X_1 \rp. 
\]
\end{corollary}
\begin{proof} 
Such objects can apply to the case $j=1$ of \reflm{Split1}. 
The weight sum for all tuples $(U, \bs{X}) \in \TT_{\FF, 1}$ is exactly 
\[
\| \TT_{\FF, 1} \| 
= 
{d \choose q} {n - d\choose m-q} e^{-\kappa \lp \FF \rp}
{n  \choose d} 
= 
\frac{{n-d \choose m-q}{d \choose q}}{{n \choose m}}  \| \FF \|
{n \choose d}
\] 
The claim follows. 
\end{proof}

\section{Proof of \refth{3SF}} 
We prove the theorem in this section. Given $\FF \subset {X \choose m}$ and a sufficiently small $\delta \in (0, 1/2)$ by the statement, let 
\[
\ep = e^{-1/\delta}, \quad 
g = \lf e^{1/\ep} \rf, \quad 
c = e^g, \eqand 
b_*= e^c m^{\frac{1}{2} + \delta}, 
\]
assuming $\FF$ satisfies the $\Gamma \lp b_* \rp$-condition. 
WLOG $m > e^{\ep c}$, otherwise $\FF$ includes three mutually disjoint sets similarly to the proof of the sunflower lemma \cite{OriginalSF}: select any $U_1 \in \FF$ eliminating all sets in $\FF$ that intersect with $U_1$. By the $\Gamma\lp b_* \rp$-condition with $b_* > 3m$, this removes less than a third of the original $\FF$. Find $U_2$ and $U_3$ in the remaining $\FF$ similarly, and the obtained three are mutually disjoint.

Further let 
\[ 
z= \lc \log_2 m^{(1-\delta)/2} \rc, \quad 
r = 2^z, \eqand 
q  = \frac{m}{r}. 
\]
Assume $n=|X|$ is larger than $m^4$ and divisible by $m r$. Otherwise add some extra elements to $X$. 

\subsection{Preprocess} 
On such objects, we first perform our initial construction. Prove a recursive statement.

\begin{lemma} 
Let 
\be{i)}
\item $j \in [0, z] \cap \Z$, $\delta_j = \sum_{j'=0}^j \lp 2^{-j'} m \rp^{\frac{1}{2} + \ep} $, 
\item $X' \in {X \choose 2^{-j} n}$, weighted primitively inducing the norm $\| \cdot \|$, 
\item and $\GG \subset 2^{X'}$ such that $\| \GG \|>0$, and $\big|~ |U| - 2^{-j} m ~\big| < \delta_j $ for every $U \in \GG$. 
\ee 
There exists an $2^{z-j}$-split $\bs{X} =\lp X_1, X_2, \ldots, X_{2^{z-j}} \rp$ of $X'$, and $\GG' \subset \GG$ such that 
\[
\| \GG'  \| > \lp 1 - 4^{z -j} e^{-m^\ep} \rp \| \GG \|, \eqand 
\Big|~ |U \cap X_{j'}| - 2^{-z} m ~\Big| < \delta_z, 
\]
for every $j' \in \lbr 2^{z-j} \rbr$ and $U \in \GG'$. 
\end{lemma}
\begin{proof} 
Proof by induction on $j$ with the trivial basis $j= z$. Assume true for $j+1$ and prove true for $j$.

Let 
\[
\GG_{m_1, m_2, Y} = 
\lb U~:~ U \in \GG,~ |U \cap Y|=m_1, \textrm{~and~} 
|U \cap X'-Y|=m_2 
\rb, 
\] 
for each $Y \in {X' \choose |X'|/2}$, and $m_1, m_2 \in [m]$ with $|m_1+m_2 - 2^{-j} m| < \delta_j$. By \refco{Split4}, there are no more than $m^{-3} {|X'| \choose |X'|/2}$ sets $Y$ such that 
\beeq{eq2Initial} 
\| \GG_{m_1, m_2, Y} \|  > \frac{m^3 {|X'|/2 \choose m_1}{|X'|/2 \choose m_2}}{{|X'| \choose m_1 + m_2}} \| \GG \|. 
\eeq

We also have 
\(
&&
\ln \frac{{|X'|/2 \choose m_1}{|X'|/2 \choose m_2}} {{|X'| \choose m_1 + m_2}} 
< - \frac{\lp m_1 - \frac{m_1 + m_2}{2} \rp^2}{2 (m_1+m_2)} 
< -m^{\ep + \ep^2}, 
\\  &&
\textrm{if} \quad 
|m_1-m_2|  > m^{-\ep^2} (m_1+m_2)^{\frac{1}{2} +\ep},  
\)
by \reflm{SpaceProp}, since 
$
m_1 + m_2 > 2^{-z+1} m - \delta _j > m^{\frac{1+\delta}{2}} 
$. 

For every possible combination of $m_i$, exclude all $Y$ with \refeq{eq2Initial} from consideration.  Fix any one remaining $Y$, and all $\GG_{m_1, m_2, Y}$ meet $\neg$ \refeq{eq2Initial}. Delete from $\GG$ the union of $\GG_{m_1, m_2, Y}$ each with $|m_1-m_2|  > m^{-\ep^2} (m_1+m_2)^{\frac{1}{2} +\ep}$. Then 
\bdash 
\item this reduces $\| \GG \|$ only by its $e^{-m^\ep}$ or less, 
\item and $
|m_i - 2^{-j-1} m|  < \delta_{j+1}
$ for each remaining $\GG_{m_1, m_2, Y}$ and $i \in [2]$, since $| m_1+ m_2 - 2^{-j} m | < \delta_j$ and $|m_1-m_2| \le m^{-\ep^2} (m_1+m_2)^{\frac{1}{2} +\ep}$. 
\edash 

\medskip 

Now we obtain recursive solutions in both $Y$ and $X' - Y$. 
Weight $Y$ primitively with $U \mapsto \| \GG[U] \|$.  
Apply 
$
\GG_1  = \lb U \cap Y~:~ U \in \GG \rb 
$ 
to the induction hypothesis to obtain an $2^{z-j-1}$-split $\bs{X}_1 =  \lp X_1, X_2, \ldots, X_{2^{z-j-1}} \rp$ of $Y$ and subfamily $\GG' \subset \GG$ such that 
$
\| \GG' \| > \lp 1 - 4^{z-j-1} e^{-m^\ep} \rp \| \GG \|, 
$ 
and  $|U \cap X_{j'} - 2^{-z} m| < \delta_z$ for every $j' \in \lbr 2^{z-j-1} \rbr$ and $U \in \GG'$. 

Replace $\GG$ by $\GG'$. Similarly construct $\GG_2 \subset 2^{X'-Y}$ to obtain a $2^{z-j-1}$-split $\bs{X}_2$ of $X'-Y$ and new $\GG'$ that satisfy the two conditions. 

Concatenate the two splits $\bs{X}_i$ to construct the $2^{z-j}$-split $\bs{X}$ of $X'$. As 
\[
\lp 1 - 4^{r-j-1} e^{-m^\ep} \rp^2 \lp 1- e^{-m^\ep} \rp 
> \lp 1 - 4^{r-j} e^{-m^\ep} \rp, 
\]
the obtained $\bs{X}$ and $\GG'$ meet the two desired conditions.  
We have proven the induction step completing the proof. 
\end{proof}

For $\GG= \FF$ in $X$ weighted primitively with $U \mapsto | \FF[U]|$, obtain such an $r$-split $\bs{X} = \lp X_1, X_2, \ldots, X_r \rp$ of $X$ and $\GG'$ by the lemma. 
Replace $\FF$ by $\GG'$ and $b_*$ by $b_*/2$, then the new $\FF$ satisfies $\big|~|U \cap X_j| - q ~\big|< \delta_z$ for each $j \in [r]$ and $U \in \FF$, in addition to all the conditions seen above.

\medskip

We now construct three sets $C_i$ and subfamilies $\FF_i \subset \FF$ $\lp i \in [3] \rp$ by a recursive process with the index $j \in [r]$: initially set $C_i=\emptyset$ and $\FF_i=\FF$ for all $i$. 
At the beginning of the $j$th trial, we are given $C_i$ and $\FF_i$ with 
$
|C_i| < j q m^{-\ep}, 
$ and the $\Gamma \lp b_j, 2\rp$-condition of $\FF_i$, $i.e.$, 
$|\FF_i[S]| < 2 b_j^{-|S|} |\FF_i|$ for every nonempty $S \subset X$, where 
\[
b_j = \ep b_*  \lp 1 -  \frac{1}{r} \rp^{j-1}. 
\] 
Putting 
\[
Q = \lbr  (1-\ep)q,~ (1+\ep)q \rbr \cap \Z, 
\] 
find and fix $q_{i, j} \in Q$ 
such that $\left| U~:~ U \in \FF_i,~ |U \cap X_j| = q_{i, j}\right|$ is maximum. 
Also let $S_i$ be a maximal set in $X- C_i$ such that 
$
|\FF_i[S_i]| \ge b_j^{-|S|} |\FF_i| 
$. Update $\FF_i$ and $C_i$ by 
\[
\FF_i \leftarrow \FF_i[S_i], \eqand 
C_i \leftarrow C_i \cup S_i, 
\]
where $\leftarrow$ represents substitution for update. 
Let the other two $\FF_{i'}$ $\lp i' \in [3] - \lb i \rb \rp$ {\em exclude $S_i$}, $i.e.$, update them by 
$
\FF_{i'} \leftarrow \FF_{i'} \cap {X - S_i \choose m} 
$. 
Also performing $j \leftarrow j+1$, continue to the next trial if $j \le r$. This completes the description of our recursive process. 

\medskip 

Right after the update $\FF_i \leftarrow \FF_i[S_i]$, we have $|S_i| < q m^{-\ep}$ and the $\Gamma \lp b_j \rp$-condition of $\FF_i$; by the $\Gamma \lp b_{j-1}, 2 \rp$-condition given at the begining of the $j$th trial, 
\[
|\FF_i[S]| < \frac{2 (1 + \ep) q}{b_{j-1}^{|S|}} |\FF_i| 
< m \lp 1- \frac{1}{r} \rp^{|S|} ~b_j^{-|S|} |\FF_i|,  
\]
so $|S_i|$ must be less than $q m^{-\ep}$ while the new $\FF_i$ satisfies the $\Gamma(b_j)$-condition. After excluding $S_{i'}$ of the other two $\FF_{i'}$, the family $\FF_i$ correctly satisfies the $\Gamma(b_j, 2)$-condition. 

By these, the obtained objects safisfy that: 
\be{A)}
\item $C_i$ are three mutually disjoint sets each with $|C_i|< m^{1-\ep}$, 
\item 
$
\FF_i \subset \FF[C_i] \cap {X - \bigcup_{i' \in [3]- \lb i \rb} C_{i'} \choose m} 
$ 
with 
$|\FF_i|> m^{-r}~b_1^{-|C_i|} |\FF|$ and the $\Gamma \lp b_{r-1},~ 
2 \rp$-condition, 
\item 
and $|U \cap X_j| = q_{i, j} \in Q$ for each $i \in [3]$, $j \in [r]$ and $U \in \FF_{i}$. 
\qed 
\ee

\subsection{Recursive Updates on $\bs{X}$}  
Put $\FF_{i, 0} = \FF_i$ and $C_{i, 0} = C_i$ freeing the variables $\FF_i$ and $C_i$. Also update $b_* \leftarrow \ep b_{r-1}$ with which we use the same $b_j$ as above. The families satisfy 
$|\FF_{i, 0}| > b_*^{m-|C_{i, 0}|}$ and the $\Gamma \lp b_* \rp$-condition\footnote{
We say $\GG \subset {X \choose m}$ satisfies the $\Gamma \lp b_* \rp$-condition in $X' \subset X$ if $|\GG[S]| < b_*^{-|S|} |\GG|$ for every nonempty $S \subset X'$. 
} in $X-C_{i, 0}$, embedded in $\bs{X}$ the way C) describes.

We show the following property for every $j \in [r+1]$.

\medskip

{\noindent\bf Property $\Pi_j$}: there exist three mutually disjoint sets $C_i \supset C_{i. 0}$ and subfamilies $\FF_i \subset \FF_{i. 0}$ satisfying the following conditions. 
\begin{enumerate}[i)]
\item 
$
\FF_i \subset \FF_{i, 0}[C_i] \cap {X - \bigcup_{i' \in [3]- \lb i \rb} C_{i'} \choose m} 
$
such that $|\FF_i| >  \ep^{j q} (\ep b_*)^{-|C_i| + |C_{i, 0}|} |\FF_{i, 0}|$.  
\item If $j \le r$, 
	\balph
	\item 
	\[
	\sum_{ 
	u \in [r, m] \cap \Z \atop 
	S \in {Z_j \choose u} 
	}
	\frac{|\FF_i[S]|^g}{b_j^{-(g-1)u} {m_* \choose u}}
	< |\FF_i|^g, 
	\]
	where 
	\[
	m_* = (g-1) m, \eqand 
	Z_j =\bigcup_{p=j}^r X_p - C_i, 
	\] 
	\item and the $\Gamma (b_j m^{-\ep})$-condition of $\FF_i$ in $Z_j$. 
	\ealph
\item $U \cap U' \cap \bigcup_{j' \in [j-1]} X_{j'} =\emptyset$ for each $U \in \FF_i$ and $U' \in \FF_{i'}$ with $i' \in [3]- \lb i \rb$.  
\end{enumerate}

\medskip 

As $\Pi_{r+1}$-iii) means three mutually disjoint sets in $\FF$, our task here is to prove $\Pi_j$ by induction on $j$. For the basis $j=1$,  choose $C_i =C_{i, 0}$ and $\FF_i=\FF_{i, 0}$ satisfying $\Pi_1$-i) to iii). Here $\Pi_1$-ii)-a) holds since $\sum_{S \in {X \choose u}} |\FF_i[S]|^g < b_*^{-(g-1)u} {m \choose u} |\FF_i|^g$ for every $u \in [m]$, by the $\Gamma(b_*)$-condition of $\FF_i$. This confirms the basis.

Assume $\Pi_j$ and prove $\Pi_{j+1}$. 
When we are given $C_i$ and $\FF_i$ of $\Pi_j$, write for simplicity 
\(
&&
b = b_j, \quad  
Z= Z_{j+1}, \quad 
X_* = X_j - C_i,  
\\ && 
q_* = q_{i, j}- |X_j \cap C_i|, \quad 
n_* = |X_*|, 
\eqand 
\HH = {X_* \choose q_*}. 
\) 
We may use $s, t, u \in \Z_{\ge 0}$ as summation/product indices. 
Obvious floor functions are omitted in the rest of the proof.

The induction step will update $C_i$ and $\FF_i$ given by $\Pi_j$, so they satisfy $\Pi_{j+1}$. We complete it in seven steps.

\medskip

\noindent
{\bf Step 1.~}{\em Construct a family $\YY_i$ of $Y \in {X_* \choose n_*/ 4}$ such that $\FF_i \cap {X - X_* \cup Y \choose m}$ is sufficiently large. } Fix each $i \in \lbr 3 \rbr$ assuming $q_*>0$. Weight $X_*$ by $w : \lp 2^{X_*} \rp^2 \rightarrow \Z_{\ge 0}$ primitively with 
$
V \mapsto |\FF_i [V]|
$,  
inducing the norm $\| \cdot \|$. 
Then the family  $\HH$ satisfies the $\Gamma_2 \lp \frac{b}{q_* m^{\ep}}, 1 \rp$-condition on $\| \cdot \|$, since 
\(
\sum_{V_1, V_2 \in \HH \atop |V \cap V'| = u} w(V_1, V_2) 
&=& 
\sum_{V_1, V_2 \in \HH \atop |V \cap V'| = u} |\FF_i[V_1]|~|\FF_i[V_2]| 
\le
\sum_{T \in {X_* \choose u}} |\FF_i [T]|^2
\\ &<&
|\FF_i|^2  \lp b m^{-\ep} \rp^{-u} {q_* \choose u}
\le 
\lp \frac{b}{q_* m^{\ep}} \rp^{-u} \| \HH \|^2  
, 
\) 
for each $u \in [q_*]$, by $\Pi_j$-ii)-b) and $\| \HH \| = |\FF_i|$.

Apply \refth{EGT42} to $\HH$. 
There exists a family $\YY_i \subset {X_* \choose n_*/4}$ such that
$
|\YY_i| > {n_* \choose n_*/4} \lp 1  - \ep \rp
$,
and 
\beeqn 
&& \label{eqStep1}
\left| \FF_{Y, i} \right|
>
\frac{{n_*/4 \choose q_*}}{{n_* \choose q_*}} |\FF_i|
\lp 1  - \ep \rp, \quad 
\textrm{for every~} Y \in \YY_i, 
\\ && \nonumber 
\textrm{where} \quad \FF_{Y, i} := \FF_i \cap {X - X_* \cup Y \choose  m}.
\eeqn

\medskip

\noindent
{\bf Step 2.~}{\em With another weight $w$ on $X_*$, confirm some $\Gamma_g$-condition of $\HH$.} 
For each $i$, skip this step, Steps 3, 4 and 6 if $j=r+1$ or $q_*=0$. 
Denote by $S$ a subset of $X_*$, and by $T$ a nonempty subset of $Z$. 
Define 
\( 
w_T: \lp 2^{X_*} \rp^g \rightarrow \R_{\ge 0}, \quad 
(V_1, V_2, \ldots, V_g) \mapsto 
\frac{\prod_{t=1}^g \left| \FF_i[V_t \cup T] \right|}
{
b^{-(g-1)|T|} {m_* \choose |T|} 
}, 
\) 
for each $T$ inducing the norm $\| \cdot \|_T$. Reset $w$ and $\| \cdot \|$ by 
\(
w: \lp 2^{X_*} \rp^g \rightarrow \R_{\ge 0}, \quad 
\bs{V} \mapsto
\sum_{r \le u \le m \atop
T \in {Z \choose u} }  
w_T \lp \bs{V} \rp. 
\) 
Also denote 
\(
&& 
w_{S, T}: = 
\frac{\left| \FF_i[S \cup T] \right|}
{
b^{-\lp 1- \frac{1}{g}\rp |T|} {m_* \choose |T|}^{\frac{1}{g}}
},  
\quad 
\textrm{for each $S$ and $T$},   
\\ && 
\gamma_T := |\FF_i|^{-g} 
\sum_{0 \le s \le q_* \atop S \in {X_* \choose s}} 
\frac{\left| \FF_i[S \cup T] \right|^g}
{
b^{-(g-1) \lp s + |T| \rp} {m_*  \choose {s + |T|}}
}, 
\quad 
\textrm{for each $T$},   
\\ && 
b_g := \frac{b^{1- \frac{1}{g}}}{2^g m_*^{\frac{1}{g}}}, \quad 
b_\dagger := \frac{b_g}{(g-1) q_*}, 
\eqand 
h := \lp \frac{|\FF_i|}{\left\| \HH \right\|} \rp^g. 
\) 
This step shows the $\Gamma_g \lp b_\dagger, h \rp$-condition of $\HH$ on $\| \cdot \|$. 

\medskip

See the following remarks. 
\be{A)} \setcounter{enumi}{3}
\item  For each $S$ and $T$, 
\[
\sum_{(V_1, V_2, \ldots, V_g) \in \HH[S]^g}
~
\prod_{t=1}^g \left| \FF_i[V_t \cup T] \right| 
= \left| \FF_i[S \cup T] \right|^g, 
\]
so 
\[
\left\| \HH[S] \right\|^g =
\sum_{\bs{V} \in \HH[S]^g} w \lp \bs{V} \rp
=
\sum_{r \le u \le m \atop
T \in {Z \choose u} } 
\sum_{\bs{V} \in \HH[S]^g} w_T \lp \bs{V} \rp
=
\sum_{r \le u \le m \atop
T \in {Z \choose u} } 
\frac{|\FF_i[S \cup T]|^g  
}
{
b^{-(g-1)u} {m_* \choose u}
}. 
\]

\item $\sum_{r \le u \le m \atop T \in {Z \choose u}} \gamma_T < 1$ by $\Pi_j$-ii)-a).

\item For each $T$, 
\(
&& 
\sum_{0 \le s \le q_* \atop S \in {X_* \choose s}} 
\frac{\| \HH[S] \|_T^g}{b^{-(g-1)s}~{m_* \choose s} } 
=
\sum_{0 \le s \le q_* \atop S \in {X_* \choose s}} 
\sum_{\bs{V} \in \HH[S]^g}  
\frac{w_T \lp \bs{V} \rp}{b^{-(g-1)s}~{m_* \choose s} } 
\\ &\le& 
\sum_{0 \le s \le q_* \atop S \in {X_* \choose s}} 
\frac{\left| \FF_i[S \cup T] \right|^g}
{
b^{-(g-1)\lp s + |T| \rp} {m_*  \choose s + |T|}
}
= 
\gamma_T |\FF_i|^g. 
\)
The inequality holds by D) and 
\[ 
{m_* \choose |T|} {m_* \choose s} 
\ge 
{m_* -s \choose |T|} {m_* \choose s} 
= 
{m_* \choose s+|T|}{s+|T| \choose s}
\ge 
{m_* \choose s+|T|}. 
\]

\item 
$h$ is greater than 1, otherwise 
\(
|\FF_i|^g &\le& \| \HH \|^g 
=
\sum_{r \le u \le m \atop
T \in {Z \choose u}} 
\frac{|\FF_i[T]|^g  
}
{
b^{-(g-1)u} {m_* \choose u}
} 
< 
|\FF_i|^g, 
\)
by D) and $\Pi_j$-ii)-a). 

\item For each $S$ and $T$, 
\[ 
w_{S, T} = \| \HH[S] \|_T \le 
\gamma_T^{\frac{1}{g}} |\FF_i| b^{ - \lp 1- \frac{1}{g} \rp |S|} m_*^\frac{|S|}{g},  
\]
due to F) and 
\[
w_{S, T}^g = 
\frac{|\FF_i[S \cup T]|^g}{b^{-(g-1)|T|} {m_* \choose |T|}} 
=
\sum_{\bs{V} \in \HH[S]^g} w_T \lp \bs{V} \rp 
= \| \HH[S] \|_T^g. 
\qed 
\]
\ee

\medskip 

Let us show the $\Gamma_g$-condition with the remarks. It suffices to confirm 
\beeq{eqMainStep6}
\| \PP_{s, g} \|_T < \gamma_T b_g^{-s} {(g-1)q_* \choose s} |\FF_i|^g, 
\eeq
for every $T$ and $s \in [(g-1) q_*]$: it is due to E) and 
\[
\| \PP_{s, g} \| 
= 
\sum_{\bs{V} \in \PP_{s, g}} w \lp \bs{V} \rp 
= 
\sum_{\bs{V} \in \PP_{s, g}} 
\sum_{
r \le u \le m_* \atop T \in{Z \choose u}
} 
w_T \lp \bs{V} \rp 
= 
\sum_{
r \le u \le m_* \atop T \in{Z \choose u} 
} 
\| \PP_{s, g} \|_T. 
\]
Here $\PP_{s, g}$ is defined for $\HH$ as in Section 2, $i.e.$, 
\beeq{eqPPStep6}
\PP_{s, g} = \lb \bs{V} ~:~ \bs{V} \in \HH^{g},~ 
|union(\bs{V})| = g q_* - s
\rb, 
\eeq 
for every $s \ge 0$. 
So if we show \refeq{eqMainStep6} for all $T$ and $s$, we have the $\Gamma_g \lp b_\dagger, h \rp$-condition of $\HH$ on $\| \cdot \|$.

Fix each $T$ for the proof. Define 
\[
w_{g'}: \lp 2^{X_*}\rp^{g'} \rightarrow \R_{\ge 0}, \quad 
(V_1, V_2, \ldots, V_{g'}) \mapsto 
\frac{\prod_{t=1}^{g'} \left| \FF_i[V_t \cup T] \right|}
{
b^{-\lp 1-\frac{1}{g}\rp g' |T|} {m_* \choose |T|}^{\frac{g'}{g}}
},  
\] 
for $g' \in [2, g] \cap \Z$ inducing the norm $\| \cdot \|_{g'}$. We verify 
\beeq{eqPfStep6}
\| \PP_{s, g'} \|_{g'} < \gamma_T^{\frac{g'}{g}}  b_{g'}^{-s} {(g'-1)q_* \choose s} 
|\FF_i|^{g'}, 
\eeq 
for every $g'$ and $s$, where 
$b_{g'}:= 2^{g-g'} b_g$, and 
$\PP_{s, g'}$ is given by replacing $g$ by $g'$ in \refeq{eqPPStep6}. 
The case $g' = g$ means \refeq{eqMainStep6}. 

\medskip 

Proof of \refeq{eqPfStep6} by induction on $g'$. Fix each $s \in [(g'-1) q_*]$ for the basis $g'=2$. 
From H) above, 
\(
\| \PP_{s, 2} \|_2 &=& \sum_{\bs{V} \in \PP_{s, 2}} w_2 \lp  \bs{V} \rp  
= 
\sum_{(V_1, V_2) \in \PP_{s, 2}} 
\frac{|\FF_i[V_1 \cup T]|~|\FF_i[V_2 \cup T]|}{
b^{-\lp 1- \frac{1}{g} \rp 2|T|} {m_* \choose |T|}^{\frac{2}{g}} 
} 
\\ &\le& 
\sum_{V_1 \in \HH} \frac{|\FF_i[V_1 \cup T]|}{
b^{-\lp 1- \frac{1}{g} \rp |T|} {m_* \choose |T|}^{\frac{1}{g}} 
} 
\sum_{V_2 \in \HH \atop \textrm{with~} |V_1 \cup V_2|=2q_* - s} \frac{|\FF_i[V_2 \cup T]|}{
b^{-\lp 1- \frac{1}{g} \rp |T|} {m_* \choose |T|}^{\frac{1}{g}} 
} 
\\ &\le& 
w_{\emptyset, T}~ 
{q_* \choose s} \max_{S \in {X_* \choose s}} w_{S, T}  
\\ &\le&
\gamma_T^{\frac{2}{g}}  |\FF_i|^2 b^{-\lp 1- \frac{1}{g}\rp s} m_*^{\frac{s}{g}} {q_* \choose s}
\\ &<& 
\gamma_T^{\frac{2}{g}}   b_2^{-s} 
{q_* \choose s} |\FF_i|^2
, 
\)
proving the basis.

Assume true for $g'-1$ and prove true for $g'$. By induction hypothesis, 
\[
\sum_{\bs{V} \in \PP_{v, g'-1} 
} 
w_{g'-1} \lp \bs{V} \rp
=
\| \PP_{v, g'-1} \|_{g'-1} 
< 
\gamma_T^{\frac{g'-1}{g}} b_{g'-1}^{-v} {(g'-2)q_* \choose v} |\FF_i|^{g'-1}, 
\]
for $v \in [(g'-2)q_*]$. Fix any $s \in [(g'-1)q_*]$. Since $\PP_{v, g'-1}=\emptyset$ if $v > (g'-2) q_*$, 
\(
&& 
\sum_{v \in [s] \atop \bs{V} \in \PP_{v, g'-1}}  w_{g'-1} (\bs{V})  
\sum_{S \in {union(\bs{V}) \choose s-v}} w_{S, T} 
\\ &\le& 
\sum_{v \in \min \lbr s,~(g'-2) q_*\rbr \atop \bs{V} \in \PP_{v, g'-1} 
} 
w_{g'-1} \lp \bs{V} \rp 
~
{(g'-1) q_* -v  \choose s- v} 
\max_{S \in {union(\bs{V}) \choose s-v}} w_{S, T} 
\\ &<& 
\sum_{v=1}^{s}
\gamma_T^{\frac{g'-1}{g}} b_{g'-1}^{-v} {(g'-2)q_* \choose v} |\FF_i|^{g'-1} 
\\ && \quad 
\cdot~ 
{(g'-1) q_* -v  \choose s- v} 
\gamma_T^{\frac{1}{g}} |\FF_i| b^{ - \lp 1- \frac{1}{g} \rp (s-v)} m_*^\frac{s-v}{g}
\\ &<& 
\gamma_T^{\frac{g'}{g}} \lp 2 b_{g'} \rp^{-s} 
{(g'-1) q_* \choose s} 
|\FF_i|^{g'} 
\sum_{v=1}^{s}
{s \choose v}.  
\)
The last line is due to 
$
{(g'-2)q_* \choose v} {(g'-1) q_* -v  \choose s- v} 
< 
{(g'-1)q_* \choose v} {(g'-1) q_* -v  \choose s- v} 
\\ 
= 
{(g'-1)q_* \choose s}{s \choose v}
$ for every $v$.

For $v=0$, we have 
\[
\sum_{\bs{V} \in \PP_{0, g'-1}}  w_{g'-1} (\bs{V})  
\le w_{\emptyset, T}^{g'-1}
\le \gamma_T^{\frac{g'-1}{g}} |\FF_i|^{g'-1},  
\]
by H), so 
\(
\sum_{\bs{V} \in \PP_{0, g'-1}}  w_{g'-1} (\bs{V})  
\sum_{S \in {union(\bs{V}) \choose s}} w_{S, T} 
< \gamma_T^{\frac{g'}{g}} \lp 2 b_{g'} \rp^{-s} 
{(g'-1) q_* \choose s} 
|\FF_i|^{g'}. 
\) 

As $w_{S, T}= 
\frac{\sum_{V \in \HH[S]} \left| \FF_i[V \cup T] \right|}
{
b^{-\lp 1- \frac{1}{g}\rp |T|} {m_* \choose |T|}^{\frac{1}{g}}
}$ for each $S$, we conclude that 
\(
\| \PP_{s, g'} \|_{g'}  
&=&  \sum_{\bs{V} \in \PP_{s, g'}}  w_{g'} (\bs{V}) 
= 
\sum_{(V_1, V_2, \ldots, V_{g'}) \in \PP_{s, g'}} 
~
\frac{\prod_{t=1}^{g'} \left| \FF_i[V_t \cup T] \right|}
{
b^{-\lp 1-\frac{1}{g}\rp g' |T|} {m_* \choose |T|}^{\frac{g'}{g}}
} 
\\ &\le& 
\sum_{0 \le v \le s \atop \bs{V} \in \PP_{v, g'-1}}  w_{g'-1} (\bs{V})  
\sum_{S \in {union(\bs{V}) \choose s-v}} w_{S, T} 
\\ &<& 
\gamma_T^{\frac{g'}{g}} \lp 2 b_{g'} \rp^{-s} 
{(g'-1) q_* \choose s} 
|\FF_i|^{g'} 
\sum_{v=0}^{s}
{s \choose v} 
\nexteqline 
\gamma_T^{\frac{g'}{g}} b_{g'}^{-s} 
{(g'-1) q_* \choose s} 
|\FF_i|^{g'}, 
\)
completing the induction step.

This confirms \refeq{eqPfStep6}, hence the $\Gamma_g \lp b_\dagger, h \rp$-condition of $\HH$ on $\| \cdot \|$ as well.

\medskip 

\noindent
{\bf Step 3.~}{\em Remove  $Y$ from $\YY_i$ such that 
$\sum_{T \in {Z \choose u}} |\FF_{Y, i}[T]|^g$ 
is too large for any $u \ge r$.} 
With the $\Gamma_g$-condition meeting 
\[
b_\dagger > \frac{4 g^2 n_*}{n_*/4}, \eqand 
h>1 \textrm{~ from G)}, 
\] 
apply \refco{EGTTildeG} to $\HH$. There are $\lc \lp 1-  \ep \rp {n_* \choose n_*/4} \rc$ sets $Y \in {X_* \choose n_*/4}$ such that
\(
&&
\sum_{r \le u \le m \atop
T \in {Z \choose u} } 
\frac{|\FF_{Y, i}[T]|^g  
}
{
b^{-(g-1)u} {m_* \choose u}
}
= 
\sum_{\bs{V} \in \lbr \HH \cap {Y \choose q_*} \rbr^g} w \lp \bs{V} \rp 
\\ &<&
\frac{\lp 1 + \frac{h}{g} \rp {n_*/4 \choose q_*}^g}{\ep  {n_* \choose q_*}^g}
\| \HH \|^g 
< 
\frac{2 {n_*/4 \choose q_*}^g}{\ep  {n_* \choose q_*}^g}
|\FF_i|^g
\\ &<&
\frac{3}{\ep} |\FF_{Y, i}|^g,
\)
since \refeq{eqStep1}. As 
$b = b_{j+1} \lp 1 - r^{-1} \rp^{-1}$, 
the inequality means 
\beeq{eqStep3}
\sum_{ 
r \le u \le m \atop
T \in {Z \choose u} 
}
\frac{|\FF_{Y, i}[T]|^g}{b_{j+1}^{-(g-1)u} {m_* \choose u}}
< \ep |\FF_{Y, i}|^g. 
\eeq
Delete $Y$ such that $\neg$\refeq{eqStep3} from $\YY_i$. Now the family satisfies $|\YY_i| > \lp 1- 2 \ep \rp {n_* \choose n_*/4}$, and \refeq{eqStep1} $\wedge$ \refeq{eqStep3} for every $Y \in \YY_i$.

\medskip

\noindent
{\bf Step 4.~}{\em Further delete some undesired $Y$ from $\YY_i$.}  
Reset $b$ by $b \leftarrow \ep b_j m^{-\ep}$. Let $w_T$, $\| \cdot \|_T$, $w_{S, T}$, $\gamma_T$, and $b_g$ be the same as Step 2 with the updated $b$. Also let 
\(
w: \HH^g \rightarrow \R_{\ge 0}, \quad 
\bs{V} \mapsto
\sum_{
T \in {Z \choose 1} } 
w_T \lp \bs{V} \rp, 
\) 
re-defining $\| \cdot \|$ and $h= |\FF|^g \| \HH \|^{-g}$ accordingly.

This just considers $u=1$ instead of all $u \in [r, m] \cap \Z$. The following statements can be verified similarly to Steps 2 and 3. 
\bdash 
\item From $\Pi_j$-ii)-b), 
\[
\sum_{ 
u \in [m] \atop 
S \in {Z_j \choose u} 
}
\frac{|\FF_i[S]|^g}{b^{-(g-1)u} {m_* \choose u}}
< |\FF_i|^g, 
\quad \Rightarrow \quad 
\sum_{
T \in {Z \choose 1} 
} 
\gamma_T < 1. 
\] 
\item 
\[
\sum_{s \in [q_*] \atop S \in {X_* \choose s}} 
\frac{\| \HH[S] \|_T^g}{b^{-(g-1)s}~{m_* \choose s} }
\le \gamma_T |\FF_i|^g,
\quad 
\textrm{for each~} T \in {Z \choose 1}. 
\]

\item $h>1$, and $w_{S, T}=\| \HH[S] \|_T$ for all $S \subset X_*$ and $T \in {Z \choose 1}$. 

\item $\HH$ satisfies the $\Gamma_g \lp b_\dagger, h \rp$-condition on $\| \cdot \|$. 

\item 
\[
\sum_{
T \in {Z \choose 1} } 
\frac{|\FF_{Y, i}[T]|^g  
}
{
b^{-g+1} m_*
}
< \frac{3}{\ep}  |\FF_{Y, i}|^g, 
\] 
for more than $1-\ep$ of all $Y \in {X_* \choose n_*/4}$, meaning 
\beeq{eqStep4} 
|\FF_{Y, i}[T]| < \frac{m^{2 \ep}}{b_{j+1}} |\FF_{Y, i}|, \quad 
\textrm{for every~} T \in {Z \choose 1}. 
\eeq 
\edash 

\medskip 

Eliminate all $Y$ with $\neg$ \refeq{eqStep4} from $\YY_i$.

\medskip 

\noindent
{\bf Step 5.~}{\em Update $\FF_i$ so they satisfy $\Pi_{j+1}$-i) and iii).} 
The obtained $\YY_i$ is a subfamily of ${X_* \choose n_*/4}$ such that $|\YY_i| > {n_* \choose n_*/4} (1-3\ep)$, \refeq{eqStep1}, and \refeq{eqStep3} $\wedge$ \refeq{eqStep4} for every $Y \in \YY_i$. 
Extend the $n_*/4$-sets in $\YY_i$ to $n'$-sets in the universal set $X_j$ where $n' := 3 q+ |X_j|/4$. 
Noting Remarks L) and M) of Section 2 with $q_* + n_*/4 < n' < |X_j|/3$, we see the obtained family $\YY'_i:= Ext \lp \YY_i, n' \rp$ has a cardinality at least $\lc {|X_j| \choose n'} \lp 1 - 3 \ep \rp \rc$. If $q_*=0$, the same goes for $\YY'_i := {X_j \choose n'}$. 

Performing the above for the three $i$, we have 
\[
\left \lceil {|X_j| \choose n'}
{|X_j|-n' \choose n'}
{|X_j|-2n' \choose n'}
\lp 1- 9 \ep \rp \right \rceil
\]
tripes $\lp Y_1, Y_2, Y_3 \rp$ such that $Y_i \in \YY'_i$, and $Y_i$ are mutually disjoint.  Fix such a triple $(Y_1, Y_2, Y_3)$.

Choose any $Y \in \YY_i \cap {Y_i \choose n_*/4}$ for each $i$, and update $\FF_i$ by $\FF_i \leftarrow \FF_{Y, i} $. By construction so far, the new $\FF_i$ satisfy: 
\be{A)} \setcounter{enumi}{8}
\item $\Pi_{j+1}$-i) and iii). 
\item If $j \le r$, 
\[ 
\sum_{ 
r \le u \le m \atop
S \in {Z \choose u} 
}
\frac{|\FF_i[S]|^g}{b_{j+1}^{-(g-1)u} {m_* \choose u}}
< \ep  |\FF_i|^g, 
\] 
\item and $|\FF_i[T]| < \frac{m^{2\ep}}{b_{j+1}}|\FF_i|$ for each $T \in {Z \choose 1}$. 
\qed 
\ee

\medskip

\noindent
{\bf Step 6.~}{\em Find a set $S \subset Z$ meeting some desired conditions.}  Reset $b$ by $b \leftarrow b_{j+1}$. Let $S$ denote a subset of $Z$. Find the maximum $v \in [0, m] \cap \Z$ such that 
\beeq{eqStep6}
\sum_{ 
S \in {Z \choose v} 
}
\frac{|\FF_i[S]|^g}{b^{-(g-1)v} {m_* \choose v}}
\ge |\FF_i|^g. 
\eeq 
There does exist such a $v$ less than $r$, as \refeq{eqStep6} is true for $v=0$ and false for $v \ge r$ by J).

Below we show the existence of $S \in {Z \choose v}$ satisfying the three conditions. 
\( 
1) && 
\sum_{r \le u \le m \atop 
T \in {Z-S \choose u} 
}
\frac{|\FF_i[S \cup T]|^g}{b^{-(g-1)u} {m_*-v \choose u}}
< \frac{1}{2} |\FF_i[S]|^g. 
\\ 2) && 
\sum_{1 \le u < r \atop 
T \in {Z-S \choose u} 
}
\frac{|\FF_i[S \cup T]|^g}{\lp \frac{b}{2} \rp^{-(g-1)u} {m_*-v \choose u}}
< |\FF_i[S]|^g. 
\\ 3) && 
|\FF_i[S]| > \frac{1}{2 b^v} |\FF_i|. 
\)
Assume $v>0$, otherwise these are clearly true by J).

Observe here that 
\[ 
\sum_{S \in {Z \choose v} \atop \textrm{with~} \neg 3)} 
\frac{|\FF_i[S]|^g}{b^{-(g-1)v} {m \choose v}}
\le 2^{-g+1} |\FF_i|^g, 
\]
similarly to having found $\Pi_1$-ii)-a) before Step 1. 
Therefore, if there were no $S \in {Z \choose v}$ such that 1) $\wedge$ 2) $\wedge$ 3), one of the following would be true: 
\(
- && 
\sum_{S \in {Z \choose v} \atop \textrm{with~} \neg 1)}  
\frac{|\FF_i[S]|^g}{b^{-(g-1)v} {m_* \choose v}}
\ge  \frac{1}{3} |\FF_i|^g; 
\\ - && 
\sum_{S \in {Z \choose v} \atop \textrm{with~} \neg 2)\textrm{-}u}  
\frac{|\FF_i[S]|^g}{b^{-(g-1)v} {m_* \choose v}}
\ge  \frac{1}{3^u} |\FF_i|^g, 
\quad 
\textrm{for some~} u \in [r-1], 
\)
where 2)-$u$ means 
\[
\sum_{
T \in {Z-S \choose u} 
}
\frac{|\FF_i[S \cup T]|^g}{\lp \frac{b}{2} \rp^{-(g-1)u} {m_*-v \choose u}}
< \frac{1}{2^u} |\FF_i[S]|^g. 
\] 
Call the two cases {\em Cases 1} and {\em 2}, respectively. 

\medskip 

We show a contradiction in Case 1 from 
\beeqn  
&& \label{eq2Step6} 
\sum_{S \in {Z \choose v} \atop \textrm{with~} \neg 1)} 
\sum_{ r \le u \le m \atop 
T \in {Z-S \choose u} 
}
\frac{|\FF_i[S \cup T]|^g}{b^{-(g-1)(v+u)} {m_* \choose v+u} {v + u \choose v}} 
\nexteqline \nonumber 
\sum_{S \in {Z \choose v} \atop \textrm{with~} \neg 1)} 
\frac{1}{b^{-(g-1)v} {m_* \choose v}}
\sum_{ r \le u \le m \atop 
T \in {Z-S \choose u} 
}
\frac{|\FF_i[S \cup T]|^g}{b^{-(g-1)u} {m_*-v \choose u}} 
\\ &\ge& \nonumber 
\sum_{S \in {Z \choose v} \atop \textrm{with~} \neg 1)} 
\frac{|\FF_i[S]|^g}{2 b^{-(g-1)v} {m_* \choose v}} 
\\ &\ge& \nonumber 
\frac{1}{6} |\FF_i|^g.  
\eeqn 
The inequality means 
\beeq{eq3Step6} 
\sum_{
r \le v' \le m \atop 
S' \in {Z \choose v'}} 
\frac{|\FF_i[S']|^g}{b^{-(g-1)v'} {m_* \choose v'}}  
\ge \frac{1}{6} |\FF_i|^g.  
\eeq 
See it as follows. Let 
\[
r_{v'} =  |\FF_i|^{-g}  \sum_{S' \in {Z \choose v'}} \frac{|\FF_i[S']|^g}{b^{-(g-1) v'} {m_* \choose v'}}, 
\]
for each $v' \in [r, m] \cap \Z$. 
It satisfies 
\[
\sum_{S \in {Z \choose v},~T \in {Z-S \choose v'- v}} 
|\FF_i[S \cup T]|^g 
\le 
\sum_{S' \in {Z \choose v'}} |\FF_i[S']|^g  {v' \choose v}
= 
r_{v'} |\FF_i|^g b^{-(g-1) v'} {m_* \choose v'} {v' \choose v}
,  
\]
since there are at most ${v' \choose v}$ pairs $(S, T)$ such that $S \cup T$ equals each given $S' \in {Z \choose v'}$. 
Summing up 
\[
\sum_{S \in {Z \choose v},~T \in {Z-S \choose v'- v}} 
\frac{|\FF_i[S \cup T]|^g }{b^{-(g-1)v'} {m_* \choose v'}{v' \choose v}}  
\le r_{v'} |\FF_i|^g 
. 
\]
for all $v'$, we see $\neg$ \refeq{eq3Step6} $\Rightarrow$ 
$\neg$ \refeq{eq2Step6}. Hence \refeq{eq3Step6}. This contradicts J), so Case 1 is impossible to occur.

Given $u \in [r-1]$ in Case 2, similarly find 
\(
&& 
\sum_{S \in {Z \choose v} \atop \textrm{with~} \neg 2)\textrm{-}u} 
\sum_{ 
T \in {Z-S \choose u} 
}
\frac{|\FF_i[S \cup T]|^g}{b^{-(g-1)(v+u)} {m_* \choose v+u} {v + u \choose v}} 
\\ &\ge&  
2^{(g-2)u}
\sum_{S \in {Z \choose v} \atop \textrm{with~} \neg 2)\textrm{-}u} 
\frac{|\FF_i[S]|^g}{b^{-(g-1) v} {m_* \choose v}} 
> \lp \frac{2^{g-2}}{3} \rp^u |\FF_i|^g, 
\)
meaning 
\[
\sum_{ 
S' \in {Z \choose v + u}} 
\frac{|\FF_i[S']|^g}{b^{-(g-1) (v+u)} {m_* \choose v+u}}  
> |\FF_i|^g.  
\]
It is against the maximality of $v$ such that \refeq{eqStep6}. 

For the $\FF_i$ updated by Step 5, we have proven the existence of $S \subset Z$ such that $|S| < r$ and 1) $\wedge$ 2) $\wedge$ 3). Denote it by $S_i$.

\medskip

\noindent
{\bf Step 7.~}{\em Perform the final updates on $C_i$ and $\FF_i$ for $\Pi_{j+1}$.}  
Let $i=1$. 
With the obtained $S_i$, update $C_i$ by $C_i \leftarrow C_i \cup S_i$  and $\FF_i$ by $\FF_i \leftarrow \FF_i [C_i]$. Then let the other two $\FF_{i'}$ $\lp i' \in [3] - \lb i \rb \rp$ exclude $S_i$ as we did before in the preprocess with $
\FF_{i'} \leftarrow \FF_{i'} \cap {X - S_i \choose m} 
$. 
By K) and $|S_i|<r$, this could reduce $|\FF_{i'}|$ only by a factor greater than $1-m^{-\delta/2}$, thus $|\FF_{i'}|^g$ by one greater than $1 -\ep$, affecting the subsequent updates trivially. We note here that the same result at the end of Step 6 holds even if $\ep$ in J) is replaced by $2 \ep$. 

Set $i=2$ to perform the same construction, where $\FF_1$ can exclude $S_2$ by 2) of Step 6 since it means the $\Gamma \lp 2 b m^{-\ep}  \rp$-condition of $\FF_1$ in $Z$.  

Finally set $i=3$ to perform the same construction on $\FF_i$ with the other $\FF_{i'}$ excluding $S_3$. Then the three new $C_i$ and $\FF_i$ all satisfy $\Pi_{j+1}$: if $j \le r$, the property $\Pi_{j+1}$-i) $\wedge$ iii) is true by I), $|S_i| < r$ and 3) of Step 6. Also ii)-a) holds by 1), and ii)-b) by 2).

\medskip

This completes our updates proving the induction step $\Pi_j \Rightarrow \Pi_{j+1}$.  We now have \refth{3SF}.

\medskip

\appendix
\section*{Appendix 1: Proof of \reflm{PhaseII}} 
\setcounter{section}{1}

\medskip

\noindent {\bf Lemma \ref{PhaseII}.}
{\em For $\FF \subset{X \choose m}$ such that $m \le \frac{n}{2}$,}
\[
\kappa \lbr {X \choose 2m} - Ext \lp \FF, 2m \rp \rbr
\ge
2 \kappa \lbr {X \choose m} - \FF \rbr.
\]
\begin{proof}
For each $S \in {X \choose m}-\FF$ and $j \in [0, m] \cap \Z$, let
\[
\FF_j = \lb T - S ~:~ T\in \FF,~ |T - S| = j \rb.
\]
There exists $j$ such that $\kappa \lp \FF_j \rp$ in the universal set $X- S$ is at most $\kappa \lp \FF \rp$ in X, otherwise
\[
|\FF| < \sum_{j\ge 0} {m \choose m-j}{n-m \choose j} e^{-\kappa \lp \FF \rp} = {n \choose m} e^{-\kappa \lp \FF \rp} = |\FF|.
\]
Taking $Ext \lp \FF_j, m \rp$ in $X-S$ with Remark A) of Sec.\ 2.3, we see
there are $\lc {n -m \choose m} e^{-\kappa \lp \FF \rp} \rc$ pairs $(S, U)$ such that $U \in {X-S \choose m}$ and $S \cup U \in Ext \lp \FF, 2m \rp$ for each $S \in {X \choose m}- \FF$.

Now consider all pairs $(S, U)$ such that $S$ and $U$ are disjoint $m$-sets, and $S \cup U \in Ext \lp \FF, 2m \rp$. Their total number is at least
${n \choose m}{n-m \choose m}={n \choose 2m}{2m \choose m}$
times
$
(1-z) + z (1-z) =1 - z^2
$
where $z=e^{-\kappa \lbr {X \choose m} - \FF \rbr}$.

As a $2m$-set produces at most ${2m \choose m}$ pairs $(S, U)$, there are at least $\lc {n \choose 2m}(1-z^2) \rc$ sets in $Ext \lp \FF, 2m \rp$.
The lemma follows.
\end{proof}

\section*{Appendix 2: Asymptotics of Binomial Coefficients} 
Let
\beeq{FunctionS}
s: (0, 1) \rightarrow (0,1),
\hspace*{5mm}
t \mapsto 1 - \lp 1 - \frac{1}{t} \rp \ln (1-t).
\eeq
By the Taylor series of $\ln(1-t)$, the function is also expressed as
\beeq{ExpandS}
s(t)
=
1 +\lp 1 - \frac{1}{t} \rp \sum_{j \ge 1} \frac{t^j}{j}
= 1 + \sum_{j \ge 1} \frac{t^j}{j} - \sum_{j\ge 0} \frac{t^j}{j+1}
= \sum_{j \ge 1} \frac{t^j}{j(j+1)}.
\eeq
We have the following double inequality.

\begin{lemma} \label{asymptotic}
For $x, y \in \Z_{>0}$ such that $x<y$,
\(
&&
\frac{1}{12 x+1} - \frac{1}{12 y} - \frac{1}{12(x-y)}
<
z
<
\frac{1}{12 x} - \frac{1}{12 y+1} - \frac{1}{12(x-y)+1},
\\ \textrm{where} &&
z = \ln {x \choose y} - y \lbr \ln \frac{x}{y} + 1 - s \lp \frac{y}{x} \rp  \rbr
- \frac{1}{2} \ln \frac{x}{2 \pi y(x-y)}.  \qed
\)
\end{lemma}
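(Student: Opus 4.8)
The plan is to derive the estimate from the non-asymptotic (Robbins) form of Stirling's formula: for every $n\in\Z_{\ge 1}$ there is a number $\theta_n$ with $\tfrac{1}{12n+1}<\theta_n<\tfrac{1}{12n}$ such that $\ln n!=n\ln n-n+\tfrac12\ln(2\pi n)+\theta_n$. Throughout I read the hypothesis as $0<y<x$, which is evidently what is intended, so that $x$, $y$, and $x-y$ are positive integers and $\binom{x}{y}$ does not vanish.

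First I would write $\ln\binom{x}{y}=\ln x!-\ln y!-\ln(x-y)!$ and substitute the three Robbins expansions. The linear parts contribute $-x+y+(x-y)=0$ and drop out; the half-logarithm parts combine into
\[
\tfrac12\ln(2\pi x)-\tfrac12\ln(2\pi y)-\tfrac12\ln\bigl(2\pi(x-y)\bigr)=\tfrac12\ln\frac{x}{2\pi y(x-y)},
\]
which is exactly the term subtracted at the end of the definition of $z$; and the remainder contributes $\theta_x-\theta_y-\theta_{x-y}$. Using $\theta_x>\tfrac{1}{12x+1}$, $-\theta_y>-\tfrac{1}{12y}$, and $-\theta_{x-y}>-\tfrac{1}{12(x-y)}$ for the lower estimate, and the reversed inequalities for the upper one, yields
\[
\tfrac{1}{12x+1}-\tfrac{1}{12y}-\tfrac{1}{12(x-y)}<\theta_x-\theta_y-\theta_{x-y}<\tfrac{1}{12x}-\tfrac{1}{12y+1}-\tfrac{1}{12(x-y)+1},
\]
which is precisely the double inequality asserted for $z$. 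So it remains only to identify the leading term $x\ln x-y\ln y-(x-y)\ln(x-y)$ with $y\bigl[\ln\tfrac{x}{y}+1-s(\tfrac{y}{x})\bigr]$.

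For that, I would set $t=y/x\in(0,1)$, so $y=tx$ and $x-y=(1-t)x$. Expanding $x\ln x-tx\ln(tx)-(1-t)x\ln((1-t)x)$, the $x\ln x$ terms cancel and the leading term collapses to $-tx\ln t-(1-t)x\ln(1-t)$, whereas $y\bigl[\ln\tfrac{x}{y}+1-s(t)\bigr]=tx\bigl[-\ln t+1-s(t)\bigr]=-tx\ln t+tx\bigl(1-s(t)\bigr)$. The two agree exactly when $tx\bigl(1-s(t)\bigr)=-(1-t)x\ln(1-t)$, i.e.\ when $s(t)=1-\bigl(1-\tfrac1t\bigr)\ln(1-t)$, which is the defining formula \refeq{FunctionS} for $s$ (equivalently, one can check it against the series \refeq{ExpandS}). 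Assembling the three contributions then gives the claimed bounds for $z$.

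This is essentially a bookkeeping exercise with no substantive obstacle; the only points needing a little care are keeping the directions of the Robbins inequalities straight when forming $\theta_x-\theta_y-\theta_{x-y}$, and noticing that the stated hypothesis should read $y<x$.
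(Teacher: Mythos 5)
Your proof is correct and follows essentially the same route as the paper's: both apply Robbins' double-inequality form of Stirling's formula to $x!$, $y!$, and $(x-y)!$ and then identify $x\ln x - y\ln y - (x-y)\ln(x-y)$ with $y\lbr \ln\frac{x}{y}+1-s\lp\frac{y}{x}\rp\rbr$ via the defining formula \refeq{FunctionS}. Your observation that the hypothesis should read $y<x$ is also correct; the appendix restatement of the lemma has it as $x>y$.
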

\begin{proof}
Stirling's approximation in form of double inequality is known as
\[
\sqrt{2 \pi n} \lp \frac{n}{e} \rp^n
\exp \lp \frac{1}{12n+1} \rp
< n! <
\sqrt{2 \pi n} \lp \frac{n}{e} \rp^n
\exp \lp \frac{1}{12n} \rp,
\]
for any $n \in \Z_{>0}$ \cite{R55}. By this we find
\(
&&
\frac{1}{12 x+1} - \frac{1}{12 y} - \frac{1}{12(x-y)}
<
\ln \frac{{x \choose y}}{u}
<
\frac{1}{12 x} - \frac{1}{12 y+1} - \frac{1}{12(x-y)+1},
\\&&
\eqwhere
u
= \sqrt{\frac{x}{2 \pi y(x-y)}} \frac{x^x}{y^y (x-y)^{x-y}}.
\)
Since
$
\ln \frac{x^x}{y^y (x-y)^{x-y}}  = y \lbr \ln \frac{x}{y} + 1 - s \lp \frac{y}{x} \rp  \rbr
$
by \refeq{FunctionS}, it proves the lemma.
\end{proof}

The lemma is useful to approximate $\ln {x \choose y}$ by $z$ to an error less than $1/6$. It derives the standard estimate of the binomial coefficient, $i.e.$, 
$
{x \choose y} < \lp \frac{e x}{y} \rp^y
$, due to $- y s \lp \frac{y}{x} \rp + \frac{1}{10x}<0$ from \refeq{ExpandS} and $\frac{x}{2 \pi y (x-y)} <1$.  So 
\[
\lp \frac{x}{y} \rp^y \le {x \choose y} < \lp \frac{e x}{y} \rp^y, 
\]
for every $x, y \in \Z_{>0}$ with $x \ge y$, as ${x \choose y} = \prod_{j=0}^{y-1} \frac{x-j}{y-j} \ge \lp \frac{x}{y} \rp^y$. 

We also have: 

\begin{lemma} \label{SpaceProp} 
For $x \in \Z_{>1}$, $y \in (1, x) \cap \Z$, $x' \in [1, x/2] \cap \Z$, and $y' \in [y-1]$ with $y' > x' + y- x$, 
\(
&&
\ln {x \choose y} - \ln {x-x' \choose y-y'}{x' \choose y'}
> 
\frac{7 \lp y' - \frac{x' y}{x}  \rp^2}{8 y}
+ \frac{-1 + \ln 2 \pi z}{2}, 
\\ \textrm{where} && 
z = \lp 1- \frac{1}{y} \rp \lp 1- \frac{y- y'}{x- x'} \rp \lp 1- \frac{y'}{x'} \rp 
\lp 1 - \frac{y}{x} \rp^{-1}.  
\) 
\end{lemma}
\begin{proof}
Apply \reflm{asymptotic} to the three binomial coefficients to see 
\[
\ln {x-x' \choose y-y'}{x' \choose y'} - \ln {x \choose y}
< u - \sum_{j \ge 1} \frac{v_j}{j(j+1)} 
+ \frac{1 + \ln \frac{w}{2 \pi}}{2},
\]
where
\(
&&
u =
\lp y-  y' \rp \lp \ln \frac{x-x'}{y-y'} + 1\rp
+ y' \lp \ln \frac{x'}{y'} + 1 \rp
- y \lp \ln \frac{x}{y} + 1 \rp,
\\  &&
v_j =\frac{(y-y')^{j+1}}{(x-x')^j} + \frac{y'^{~j+1}}{x'^{~j}}
- \frac{y^{j+1}}{x^j}, 
\\ \textrm{and} &&
w = \frac{x-x'}{(y-y')(x - x'-y + y')} \cdot  
\frac{x'}{y'(x' - y')} \cdot  
\frac{y(x - y)}{x}. 
\) 
As $w \le \frac{1}{z}$ from  
$ 
\frac{y}{y'(y-y')} \le  \lp 1- \frac{1}{y} \rp^{-1} 
$, it suffices to show 
\beeq{eq2SpaceProp} 
v_j \ge 0, \quad \textrm{for all~} j \in \Z_{>0}, 
\eeq
and 
\beeq{eqSpaceProp}
u< -\frac{7 \Delta^2}{8 y},
\eqwhere
\Delta = y' - \frac{x' y}{x}. 
\eeq

To see \refeq{eq2SpaceProp}, put 
\[
a = \frac{x'}{x}, \quad 
t =\frac{y'}{y}, \eqand 
f =
\frac{\lp 1- t \rp^{j+1}}{\lp 1- a \rp^j}
+ \frac{t^{j+1}}{a^j}, 
\]
for a given $j$. 
Then the desired condition holds if $f \ge 1$ for each fixed $a \in (0, 1)$ and all $t \in \R_{\ge 0}$. It is straightforward to check its truth.

We show \refeq{eqSpaceProp} finding that 
\(
u &=&
y' \ln \frac{x'y}{xy'} + \lp y - y' \rp \ln \frac{1- \frac{x'}{x}}{1-\frac{y'}{y}} 
\\ &=&
- \Delta \lp \frac{1}{p} + 1 \rp \ln \lp 1 + p \rp
- \Delta \lp \frac{1}{q} -1 \rp  \ln \lp 1 -  q \rp, 
\\ \textrm{where} && 
p = \frac{x \Delta}{x'y}, \eqand
q= \frac{\Delta}{y \lp 1-\frac{x'}{x} \rp}. 
\)

Observe facts.

\bdash 
\item $|q| < 1 \Leftrightarrow \frac{2x'}{x} -1 < \frac{y'}{y} < 1$ is true by $x' \le x/2$.
\item By the Taylor series of the natural logarithm,
\(
&& \lp \frac{1}{p} + 1 \rp \ln \lp 1 + p \rp
= 1 - \sum_{j \ge 1} \frac{(-p)^j}{j(j+1)},
\eqspace \textrm{if $|p|<1$,}
\\ \textrm{and} &&
\lp \frac{1}{q} -1 \rp  \ln \lp 1 -  q \rp
= - 1 + \sum_{j \ge 1} \frac{q^j}{j(j+1)},
\eqspace \textrm{as $|q|<1$,}
\)

\item So
\[
u = \Delta \sum_{j \ge 1} \frac{(-p)^j}{j(j+1)} - \Delta \sum_{j \ge 1} \frac{q^j}{j(j+1)} 
< \Delta \lp \frac{-p}{2} + \frac{p^2}{6} - \frac{q}{2} \rp
< - \frac{\Delta^2}{y}, 
\]
if $|p|<1$. 

\item $\lp 1 + \frac{1}{p} \rp \ln \lp 1 + p \rp \ge 2 \ln 2$ if $p \ge 1$, so 
\[
u < \lp -2 \ln 2 + 1  \rp \Delta - \frac{\Delta^2}{2 y}< - \frac{7 \Delta^2}{8 y}, 
\]
since $\frac{x' p}{x} = \frac{\Delta}{y}  = \frac{y'}{y} - \frac{x'}{x} \in (0, 1)$. 
\edash

\medskip

Hence \refeq{eqSpaceProp}, completing the proof. 
\end{proof}

\bibliographystyle{amsplain}


\end{document}